\newtheorem{thm}{Theorem}[section]
\newtheorem{prop}[thm]{Proposition}
\theoremstyle{plain}
\newtheorem{theo}[thm]{Theorem}
\newtheorem{lem}[thm]{Lemma}
\theoremstyle{definition}
\newtheorem{rem}[thm]{Remark}
\numberwithin{equation}{section}
\def\zz{\mathbb Z}
\def\nn{\mathbb N}
\def\rr{\mathbb R}
\def\ov{\overline}
\def\al{\alpha}
\def\be{\beta}
\def\T{\mathbf{T}}
\def\ssu{\subset}
\def\wt{\widetilde}
\def\<{\langle}
\def\>{\rangle}
\def\Z{ {\text {\rm Z} } }
\def\Q{{\text {\rm Q} } }
\def\0{{\mathbf 0}}
\def\.{\hskip.06cm}
\def\ts{\hskip.03cm}
\def\mts{\hspace{-.04cm}}
\def\conv{{\text {\rm {conv}} }}
\def\bz{{\textbf{z}}}
\def\bx{{\textbf{x}}}
\def\bt{{\textbf{t}}}
\def\by{{\textbf{y}}}
\def\poly{\textup{\textsf{P}}}
\def\P{{\textup{\textsf{P}}}}
\def\FP{{\textup{\textsf{FP}}}}
\def\SigmaP{\Sigma^{\poly}}
\def\PiP{\Pi^{\poly}}
\def\Z{\mathbb{Z}}
\def\R{\mathbb{R}}
\def\Q{\mathbb{Q}}
\def\P{\mathcal{P}}
\def\PF{\mathcal{PF}}
\def\T{\mathcal{T}}
\def\F{\mathcal{F}}
\newcommand{\cj}[1]{\overline{#1}}
\newcommand{\n}{\cj{n}}
\renewcommand{\b}{\cj{b}}
\def\albar{\cj{\alpha}}
\def\nubar{\cj{\nu}}
\def\a{\cj{a}}
\newcommand{\x}{\mathbf{x}}
\renewcommand{\t}{\mathbf{t}}
\renewcommand{\u}{\mathbf{u}}
\def\v{\mathbf{v}}
\def\zzeta{\boldsymbol{\zeta}}
\def\xxi{\boldsymbol{\xi}}
\newcommand{\y}{\mathbf{y}}
\newcommand{\z}{\mathbf{z}}
\newcommand{\floor}[1]{\lfloor#1\rfloor}
\newcommand{\polyin}[1]{\textup{poly}(#1)}
\newcommand{\Pclass}{\P_{k,\n,a}}
\newcommand{\Pbclass}{\P^{\textup{b}}_{k,\n,a}}
\newcommand{\PFclass}{\PF_{k,\n,a}}
\newcommand{\ex}{\exists}
\renewcommand{\for}{\forall}
\newcommand{\Qb}{Q^{\textup{b}}}
\newcommand{\exb}{\exists^{\textup{b}}}
\newcommand{\forb}{\forall^{\textup{b}}}
\newcommand{\Qbk}[1]{\Qb_{#1} \x_{#1}}
\def\gf{\text{GF} }
\def\tauHad{\, \star_{\tau} \,}
\def\nin{\noindent}
\def\bounded{\textup{b}}
\def\proj{\textup{proj}}
\def\NP{{\textup{\textsf{NP}}}}
\title{Complexity of short Presburger arithmetic}
\author[Danny Nguyen \and Igor Pak]{Danny Nguyen$^{\star}$ \and Igor~Pak$^{\star}$}
\thanks{\thinspace ${\hspace{-.45ex}}^\star$Department of Mathematics,
UCLA, Los Angeles, CA, 90095.
\hskip.06cm
Email:
\hskip.06cm
\texttt{\{ldnguyen,\ts{pak}\}@math.ucla.edu}}
\thanks{
\today}
\begin{document}
\maketitle


\begin{abstract}
We study complexity of short sentences in Presburger arithmetic (\textsc{Short-PA}).
Here by ``short''  we mean sentences with a bounded number of variables, quantifiers,
inequalities and Boolean operations; the input consists only of the integers involved
in the inequalities.  We prove that assuming Kannan's partition can be found in
polynomial time, the satisfiability of \textsc{Short-PA} sentences can be decided
in polynomial time.  Furthermore, under the same assumption, we show that
the numbers of satisfying assignments of short Presburger sentences
can also be computed in polynomial time.
\end{abstract}

\vskip1.5cm

\section{Introduction}

\subsection{The results}
We consider \emph{short Presburger sentences} defined as follows:
$$
(\ast) \qquad
\ex \ts\x_{1} \;\forall \ts\x_{2} \; \ex \ts\x_{3} \;\dots \; \forall/\exists \ts \x_{k} \,:\, \Phi\bigl(\x_{1}, \dots, \x_{k}\bigr),
$$
where the quantifiers alternate, the variables $\x_i \in \zz^{n_i}$ have fixed dimensions $\ov n=(n_1,\ldots,n_k)$, and $\Phi(\x_1,\ldots,\x_k)$ is a fixed Boolean combination of linear systems of the form:
$$(\ast\ast) \qquad
A_1\ts \x_1 \. + \. \ldots \. + \. A_k\ts \x_k \, \le \, \ov b\ts.
$$
In other words, everything is fixed in~$(\ast)$ except for the entries of the matrices~$A_i$ and of the vectors~$\ov b$ in $(\ast\ast)$.

Let \textsc{Short-PA} be the satisfiability problem of sentences~$(\ast)$.
This is one of the few remaining gaps in complexity of the first order
logic problems.  If any of the conditions are weakened (unbounded quantifiers,
variables,  or linear systems), the problem becomes
\NP-complete or even super-exponential (see below).

The \textsc{Short-PA} generalizes \emph{Integer Linear Programming} in
fixed dimension (cf.~$\S$\ref{LenstraKannanShort}), which can be
viewed as satisfiability of sentences
$$(\circ) \qquad
\exists \ts\bx \, : \, A\ts\bx \, \le \, \ov b\ts
$$
with $\x \in \zz^{n}$ for a fixed $n$.  Satisfiability of~$(\circ)$ in polynomial
time is due to Lenstra~\cite{L}.  Its proof relies on difficult results
in geometry of numbers (see the discussion below).

Similarly, \textsc{Short-PA} generalizes \emph{Parametric Integer Linear Programming}
in fixed dimension (cf.~$\S$\ref{LenstraKannanShort}), which can be
viewed as satisfiability of sentences
$$(\circ\circ) \qquad
\forall \ts\by \in Q \;\; \exists \ts\bx \, : \, A\ts\bx \. + \. B\ts\by \, \le \, \ov b\ts
$$
with $\x \in \zz^{n}$ and $\y \in \zz^{m}$ for fixed $n$ and $m$.
Here $Q$ is another rational polyhedron, described by another system $C\ts \y \le \cj d$.

Satisfiability of~$(\circ\circ)$ in polynomial time is due to Kannan~\cite{K1}
(Theorem~\ref{t:Kannan}).  His proof crucially relies on \emph{Kannan's partition theorem} (KPT)
(Theorem~\ref{KannanPartition}), which is somewhat technical and can be described as
follows.  KPT says that there is a partitioning of $\zz^m$ into a polynomially many
polyhedral regions~$P_i$, $1\le i \le r$, such that in order to solve for an
$\x \in \Z^{n}$ satisfying $A\x \le \b$  with $\b$ changing, one only need to
preprocess the matrix $A$ in polynomial time, and
from there get the regions~$P_i$. Then, when queried with
$\b \in P_i$, one only need to check for a finite number ($n^{4n}$) of candidate
solutions~$\x\in \zz^n$, which are called \emph{test points}.

In this paper we repeatedly use KPT as a black box, to prove the following general result:

\medskip

\nin
{\bf Theorem~A.} \ts \emph{Assuming KPT, problem \textsc{Short-PA} is in \poly. }

\medskip

The proof of our Theorem~A uses quantifier elimination inductively, with each
inductive step applying KPT in the case $m=1$.

\medskip

Let us emphasize that even the following special case of~$(\ast)$ remained wide open:
$$(\circ\mts\mts{}\circ\mts\mts{}\circ) \qquad
\exists \ts\bz \in R \;\; \forall \ts\by \in Q \;\; \exists \ts\bx \, : \, A\ts\bx \. + \. B\ts\by \. + \. C\ts\bz \, \le \, \ov b\ts.
$$
This case was singled out by Kannan in~\cite{K2} as the next challenge.

There is a natural geometric way to view these problems.  Problem~$(\circ)$
asks whether a given rational polyhedron $P \ssu \rr^d$ contains an integer point.
Problem~$(\circ\circ)$ asks whether the projection of~$P$ contains all integer points in some polyhedron~$Q$.
Finally, problem~$(\circ\mts\mts{}\circ\mts\mts{}\circ)$ asks whether there is an $R$-slice of a
polyhedron $P$ for which the projection contains all integer points in some polyhedron~$Q$.

Note that in the above three problems, the restriction in each quantifier can be pushed inward at the cost of introducing extra Boolean operators. For example:
\begin{equation*}
 \forall \ts\by \in Q \;\; \exists \ts\bx \, : \, A\ts\bx \. + \.  B\ts\by \, \le \, \ov b \quad  \iff  \quad \forall \ts\by \; \exists \ts\bx \, : \, (\y \notin Q) \lor (A\ts\bx \. + \. B\ts\by \, \le \, \ov b).
\end{equation*}

\medskip

Our next result is a counting analogue of Theorem~A.  By analogy with~$(\ast)$,
define a \emph{short Presburger formula} as a set of the form:
$$
(\ast') \qquad
\bigl\{\ts \x_{1} \, : \, \ex\ts\x_{2} \; \for\ts\x_{3} \;\dots \; \ex/\for\ts\x_{k} \;\, \Phi\bigl(\x_{1}, \x_{2}, \dots, \x_{k}\bigr)\ts\bigr\}\ts,
$$
where the dimensions and the Boolean combinations are fixed as in $(*)$.
Let \textsc{$\#$Short-PA} be the counting problem of the number of satisfying
assignments $\x_{1}$ of a short Presburger formula $(\ast')$.
The complexity of \textsc{$\#$Short-PA} was stated as an open problem
by Barvinok~\cite[$\S$5]{B2}, and as a conjecture by Woods~\cite{Woods}
(see also~\cite{W}).

\medskip

\nin
{\bf Theorem~B.} \ts \emph{Assuming KPT, the counting problem} $\#$Short-PA \. \emph{is in~\FP. }

\medskip

This is an extension of Theorem~A, as counting easily implies decision.
Following an example above, a special case of Theorem~B
computes the number of integer points defined
in~$(\circ\mts\mts{}\circ\mts\mts{}\circ)$.
The proof of Theorem~B is inductive and again uses KPT for reduction of the
number of quantifiers.  We use the Barvinok--Woods theorem (Theorem~\ref{BW})
as a base of induction.


\subsection{Historical overview}
Presburger arithmetic was introduced by Presburger in~\cite{Pre},
where he proved it is a decidable theory.
The general theory allows unbounded numbers of quantifiers, variables and Boolean operations.
 A quantifier elimination
(deterministic) algorithm was given by Cooper~\cite{C}, and was
shown to be triply exponential by Oppen~\cite{O} (see also~\cite{RL}).
A nondeterministic doubly exponential complexity lower bound was obtained
by Fischer and Rabin~\cite{FR} for the general theory.  This pioneering result was further
refined to
simply exponential nondeterministic lower bound for a bounded
number of quantifier alternations~\cite{Fur} (see also~\cite{Sca}).
Of course, in all these cases the number of variables is unbounded.

In~\cite{S}, Sch\"{o}ning proves \NP-completeness for two quantifiers
$\ts\exists x \ts \forall y\ts:\ts\Phi(x,y)$, where $x,y\in \zz$
and $\ts\Phi(x,y)\ts$ is a quantifier-free Presburger expression.
Here the expression $\ts\Phi(x,y)\ts$ has an unbounded number
of inequalities and Boolean combinations.
This improved on an earlier result by \cite{G}, who also established
that similar sentences with $k+1$ quantifier alternations and a bounded
number of variables are complete for the $k$-th level in the Polynomial Hierarchy.

In a positive direction, the progress has been slow.  The first
breakthrough was made by Lenstra~\cite{L}  (see also~\cite{Schrijver}),
who showed that the \emph{integer feasibility problem} $(\circ)$ can be solved
in polynomial time in a fixed dimension (see also~\cite{E1,FT} for better bounds).
The next breakthrough was made by Kannan~\cite{K1} (see also~\cite{K2}),
who showed how to solve \emph{parametric integer linear programs} $(\circ\circ)$ in fixed dimensions.
This result was further strengthened in~\cite{ES} (see also~\cite{E2}).
All of these greatly contrast with the hardness results from~\cite{S}
and~\cite{G}, because here only conjunctions of inequalities are allowed.

Barvinok~\cite{B1} showed that integer points in a convex polytope $P\ssu\rr^d$ can be counted in polynomial time,
for a fixed dimension~$d$.  He utilized the \emph{short generating function} approach pioneered
by Brion, Vergne and others (see~\cite{B3} for details and references).
Barvinok and Pommersheim~\cite{BP} extend this approach to prove
that integers points in a Boolean combination of polytopes can also
be counted in polynomial time.  This is in contrast with~\cite{EH}, which
proves that minimizing the number of integer points $\x$ satisfying $(\circ)$ over
different~$\ts\ov b\ts$ is $\NP$-hard.   Barvinok and Woods showed how to
count integer points in projections of (single) polytopes in polynomial
time~\cite{BW}.  Woods~\cite{W} also showed that Presburger formulas
can be characterized by having rational generating functions (see also~\cite{Woods}).
Theorem~B can be viewed as algorithmic version of this result, when the formula is short.

Barvinok's algorithm has been simplified and improved in~\cite{DK,KV};
it was also extended to various
integral sums and valuations over convex polyhedra~\cite{B+,B3,BV}.
The algorithm has important applications in a number of areas, ranging from
polynomial optimization \cite{DHKW1,DHKW2} to representation theory~\cite{CDW,PP},
to commutative algebra \cite{D+,MS} and to random sampling~\cite{Pak}.
Both Barvinok's and Barvinok--Woods' algorithms have been
implemented and used for practical computation \cite{DHTY,Kop,VSBLB}.

\subsection{Proof features and previous obstacles}
The proofs of theorems~A and~B have some unusual features when compared to
other recent work in the area.  First, we use a quantifier elimination
technique in the classical style of the formal arithmetic theory.
However, we treat Boolean formulas geometrically, in the style
of Barvinok et al., to allow the applications of KPT.  Let us emphasize that having Boolean formulas
is crucial for our proof -- without them the inductive argument crumbles, even for
sentences like $(\circ\mts\mts\mts\circ\mts\mts\mts\circ)$ above.  We refer
to~$\S$\ref{LenstraKannanShort} for a related phenomenon.

Second, the proof of Theorem~B crucially relies on the
technology of \emph{short generating functions} (GF)
$$(\divideontimes) \qquad f(\bt) \, = \,
\sum_{i=1}^N \, \frac{c_{i} \. \bt^{\a_i}}{(1-\bt^{\b_{i\ts 1}})\cdots (1-\bt^{\b_{i\ts k_{i}}})}\ts,
$$
where $c_{i} \in \mathbb{Q}, \; \cj a_{i}, \cj b_{ij} \in \zz^{n}$ and $\ts\bt^{\a}$ denotes $t_1^{a_{1}}\cdots {}\ts t_n^{a_{n}}\ts$ for $\ts\a = (a_{1},\ldots,a_{n}) \in \zz^{n}$.
We caution the reader that word ``short'' in ``short GF'' only means that the GF
is given in the form ($\divideontimes$).
It does not necessarily  mean the GF has polynomial size.
As we mentioned earlier, short GFs are a wonderful tool which allows one to take finite
unions, intersections, complements and substitutions.
Unfortunately, there is no easy way to take projections on the level of
short~GFs; the hardness result was recently proved in~\cite{W} (see also~\cite{NP2}).

The reader can be understandably confused at this point since the ability to take
projections is exactly the statement of the Barvinok--Woods theorem.  The problem is
quite delicate here: having switched from polytopes to short GFs, the Barvinok--Woods
technique cannot be iterated.  Here is a simple way to think about it.  The Barvinok--Woods theorem
allows one to efficiently compute short GFs for projections of (single) polytopes $P_1,\ldots,P_r$ in polynomial time.
Call these projections $\proj(P_{1}),\dots,\proj(P_{r})$.
Earlier tools by Barvinok and Pommersheim also allow one to compute a short GF for the union
$Y=\proj(P_1)\cup \ldots \cup \proj(P_r)$ when $r$ is bounded.
However, now that the polytopal structure is lost, there is no easy way to compute in polynomial
time another projection of~$Y$ when we are given only a short GF for~$Y$.  In fact, we recently
prove that this is computationally hard in~\cite{NP2}.

\bigskip

\section{Notations}

\nin
We use $\nn \ts = \ts \{0,1,2,\ldots\}$.

\nin
Unspecified quantifiers are denoted by $Q_{1}, Q_{2}$, etc.

\nin
Unbounded (unrestricted) quantifiers are denoted $\for$ and $\ex$.

\nin
Bounded (restricted) quantifiers are denoted $\forb$ and $\exb$.

\nin
Unquantified Presburger expressions are denoted by $\Phi, \Psi, \Gamma$, etc.

\nin
We use $\Lambda$ to denote a linear system.

\nin
We use $\left[
\begin{smallmatrix}
a \\
b
\end{smallmatrix}
\right]
$ to denote a disjunction $(a \lor b)$ and $
\big\{
\begin{smallmatrix}
a \\
b
\end{smallmatrix}
\big\}
$ to denote a conjunction $(a \land b)$.

\nin
All constant vectors are denoted $\n, \b, \albar, \nubar$, etc.

\nin
We use $0$ to denote both zero and the zero vector.

\nin
The $L_{1}$ norm of a vector $\n$ is denoted by $|\n|$.

\nin
All matrices are denoted $A, B$, etc.

\nin
All integer variables are denoted $x,y,z$, etc.

\nin
All vectors of integer variables are denoted $\x, \y, \z$, etc.

\nin
If $x_{j} \le y_{j}$ for every index $j$ in vectors $\x$ and $\y$, we write $\x \le \y$.

\nin
If $x_{j} \le c$ for every index $j$ with $c$ a constant, we write $\x \le c$.

\nin
We use $\floor{.}$ to denote the floor function.

\nin
The the vector $\y$ with coordinates $y_{i} = \floor{x_{i}}$ is denoted by $\y = \floor{\x}$.

\nin
GF is an abbreviation for ``\emph{generating function}''.

\nin
Single-variable GFs are denoted by $f(t), g(u), h(v)$, etc.

\nin
Multi-variable GFs are denoted by $A(\t), B(\u), a(\v)$, etc.

\nin
The function $\phi(\cdot)$ denotes the (binary) length of a formula, GF, matrix, vector, etc.

\nin
Half-open intervals are denoted by $[\al,\be)$, etc.

\nin
A \emph{polyhedron} is an intersection of finitely many closed half-spaces in some euclidean space~$\rr^n$.

\nin
A \emph{copolyhedron} is a polyhedron with possibly some open facets.

\nin
A \emph{polytope} is a bounded polyhedron.

\bigskip

\section{Short Presburger sentences}

\subsection{Deciding short Presburger sentences}

We consider a fixed class of short Presburger sentences in prenex normal form

\begin{equation}\label{PclassDef}
\P_{k,\n,a} = \Big\{ S = \big[ Q_1 \x_{1} \; Q_2 \x_{2} \; \dots \exists \x_{k} \,:\, \Phi(\x_{1}, \dots, \x_{k}) \big] \Big\}.
\end{equation}
{}

\noindent Here $Q_1,\dots,Q_{k} \in \{\for, \ex\}$ are $k$ alternating quantifiers with $Q_{k} = \ex$, each $\x_{i} \in \Z^{n_{i}}$ with fixed dimensions $\n=(n_{1}, \dots, n_{k})$, and $\Phi$ is a Boolean combination of at most $a$ rational inequalities in $\x_i$'s.
We can also assume each $\x_{i} \ge 0$, because every integer variable can be represented as the difference between $2$ nonnegative variables, and doing so only increases each $n_i$ by a factor of $2$.
For a sentence $S \in \P_{k,\n,a}$, we denote by $\phi(S)$ the binary length of~$S$. Now Theorem~A can be restated as follows:

\begin{theo}\label{th:main_1}
Assuming KPT, every $S \in \Pclass$ can be decided in polynomial time
with respect to~$\phi(S)$.  The polynomial degree depends only on $k,\n$ and~$a$.
In other words, $\Pclass \in \poly$ for every $k,\n,a$.
\end{theo}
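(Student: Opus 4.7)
I would prove Theorem \ref{th:main_1} by induction on the number of quantifier blocks $k$, with KPT used as a black box to eliminate the innermost block at each step. Since $k$, $\n$, and $a$ are all fixed, a constant number of elimination steps, each causing at most a polynomial blow-up, yields the desired polynomial-time procedure. To make the induction close cleanly, I would work with a slightly extended class that allows the innermost quantifier $Q_k$ to be either $\ex$ or $\for$; the universal case reduces to the existential one via the identity $\for \x_k : \Phi \equiv \neg \ex \x_k : \neg \Phi$, followed by negating the final answer.

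\textbf{The recursive step.} The base case $k = 1$ reduces to $\ex \x_1 : \Phi(\x_1)$: expand $\Phi$ into DNF $\bigvee_j \Lambda_j$ of at most $2^a$ disjuncts, and decide each $\ex \x_1 : \Lambda_j$ by Lenstra's theorem in fixed dimension $n_1$. For the inductive step $k \ge 2$ with $Q_k = \ex$, expand $\Phi$ into DNF and distribute: $\ex \x_k : \Phi \equiv \bigvee_j \ex \x_k : \Lambda_j$, where each $\Lambda_j$ has the shape $A_k^{(j)} \x_k \le \b^{(j)} - \sum_{i<k} A_i^{(j)} \x_i$ with the right-hand side linear in $(\x_1, \dots, \x_{k-1})$. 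Applying KPT (with fixed matrix $A_k^{(j)}$ and parameter dimension bounded by $a$) partitions the parameter space into polynomially many polyhedral regions $R_{j,\ell}$ carrying a bounded family of piecewise-affine integer test points $\x_k^{(j,\ell,s)}$, such that feasibility on $R_{j,\ell}$ is equivalent to one of the test points satisfying $\Lambda_j$. Substituting each test point into $\Lambda_j$ gives a quantifier-free Boolean combination $\Psi(\x_1, \dots, \x_{k-1})$ of polynomial size equivalent to $\ex \x_k : \Phi$, so the sentence reduces to $Q_1 \x_1 \cdots Q_{k-1} \x_{k-1} : \Psi$, which lies in the extended class with $k - 1$ blocks and is handled by the inductive hypothesis.

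\textbf{Main obstacles.} The principal technical hurdle is the clean integration of KPT's test points into the Boolean formula. Test points are typically piecewise-affine in the parameters, with coefficients that may involve floors of rational-linear expressions; these must be written as honest Presburger formulas, either by refining the KPT partition so each piece carries only affine test points, or by introducing bounded auxiliary existential variables for the floor remainders and absorbing them into the innermost $\ex \x_k$ block before elimination (so that no new alternations are created). A secondary issue is controlling the blow-up in the number of atomic inequalities: each of the $k$ iterations multiplies the atom count by a polynomial factor whose degree depends on $n_k$ and $a$, and one must verify that after $k$ iterations the total polynomial degree remains bounded by a function of $(k, \n, a)$ alone. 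Finally, passing between DNF and CNF via negation in the $Q_k = \for$ case keeps the formula size polynomial because $a$ is a fixed constant.
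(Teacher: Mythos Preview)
Your plan has a genuine gap at the inductive step, and it is not the ``secondary issue'' you flag but the central obstruction. After you apply KPT to eliminate $\ex\x_k$, the number of regions $R_{j,\ell}$ is polynomial \emph{in the input length} $\phi(S)$, not a constant: Theorem~\ref{KannanPartition} gives $r \le (mn\phi)^{qn^{\delta n}}$ with $\phi$ in the base. Your quantifier-free formula $\Psi(\x_1,\dots,\x_{k-1})$ is therefore a Boolean combination of $\Theta(r)$ atoms, i.e.\ its atom count grows with the input. This means $Q_1\x_1\cdots Q_{k-1}\x_{k-1}:\Psi$ does \emph{not} lie in any class $\P_{k-1,\n',a'}$ with fixed $a'$, so the inductive hypothesis does not apply. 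You cannot weaken the hypothesis to allow $a$ polynomial in the input either: already for $k=3$ with long systems the problem is $\NP$-complete (see $\S$\ref{LenstraKannanShort}), and more directly, your base case would need to expand to DNF, which is exponential in the number of atoms.

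The paper's proof avoids this trap by a structural preprocessing that you are missing. Lemmas~\ref{red1}--\ref{red3} first convert $S$ to a \emph{disassociated form} in which the first $k-1$ variables are singletons $z_1,\dots,z_{k-1}$, each $z_{j+1}$ is the binary concatenation of $z_j$ with fresh digits, and the innermost system $\Lambda(z_{k-1},\z_k)$ depends only on $z_{k-1}$. KPT is then applied with a \emph{one-dimensional} parameter $z_{k-1}$, partitioning $\R$ into $r$ intervals. The crucial move is that, because $z_1$ determines the high-order bits of $z_{k-1}$, the range of $z_{k-1}$ compatible with a given $z_1$ is a single interval $I_{z_1}$; for all but at most $r$ values of $z_1$ this interval lies entirely inside one KPT piece $R_i$. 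This lets the paper case-split on the \emph{outermost} variable $z_1$ into polynomially many \emph{separate} subproblems, each of which is genuinely short (the test-point formula $\Gamma_i$ on a single region has at most $n^{4n}$ disjuncts) and has $k-1$ quantifiers. Your approach instead tries to case-split on the full tuple $(\x_1,\dots,\x_{k-1})$, which cannot be pulled outside the alternating quantifier prefix; that is why you end up with one long formula rather than many short ones.
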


As we mentioned in the introduction, from Kannan's Theorem 3.2 in \cite{K1}, every such class $\Pclass$ with $k = 2$ can be decided in polynomial time with respect to $\phi(S)$, with the polynomial degree depending on $\n$ and $a$. In the literature, the case $k = 2$ is called Parametric Integer Linear Programming, because every such problem has the form $\forall\y \; \exists\x : \Phi(\y,\x)$, where $\y$ varies over the parameter space $\Z^{n_1}$, and for each such $\y$ we need to solve an Integer Linear Programming problem for $\x \in \Z^{n_2}$.

\begin{prop}\label{p:Sigma}
$\Pclass \in \SigmaP_{k-2}$ if $k$ is odd and $\Pclass \in \PiP_{k-2}$ if $k$ is even.
\end{prop}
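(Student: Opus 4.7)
The plan is to invoke Kannan's theorem on Parametric ILP (unconditionally from~\cite{K1}) to absorb the two innermost quantifier blocks $Q_{k-1}\x_{k-1}\,\exists\x_k$ into a single polynomial-time inner predicate, and then read off the remaining $k-2$ alternations as a standard polynomial-hierarchy verifier.  For the parity count: since $Q_k=\exists$ and the quantifiers strictly alternate, we have $Q_{k-2}=\exists$ always, while $Q_1=\exists$ iff $k$ is odd; removing the last two blocks leaves $k-2$ alternations starting with $\exists$ when $k$ is odd and with $\forall$ when $k$ is even, landing in $\SigmaP_{k-2}$ and $\PiP_{k-2}$ respectively.

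\smallskip\noindent\textbf{Step 1 (absorbing the inner $\forall\exists$).}  Define
$$\Psi(\x_1,\ldots,\x_{k-2}) \; := \; \bigl[\ts\forall\ts\x_{k-1}\;\exists\ts\x_k\,:\,\Phi(\x_1,\ldots,\x_k)\ts\bigr].$$
For any fixed integer values of $\x_1,\ldots,\x_{k-2}$, the bracketed sentence is a $\forall\exists$ Presburger sentence in the two blocks $\x_{k-1}\in\Z^{n_{k-1}}$ and $\x_k\in\Z^{n_k}$ of fixed dimension, with numeric data of size polynomial in $\phi(S)$ plus the sizes of the substituted vectors.  Expanding $\Phi$ into its bounded-size DNF (at most $2^a$ disjuncts, a constant since $a$ is fixed) reduces the evaluation of $\Psi$ to a bounded Boolean combination of Parametric ILP instances to which Kannan's Theorem~3.2 of~\cite{K1} applies, together with a fixed-dimension coverage test verifying that the resulting integer projections cover $\Z^{n_{k-1}}$.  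Hence $\Psi\in\poly$; in particular, for $k=2$ this gives $\Pclass\in\poly=\PiP_0$ directly.

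\smallskip\noindent\textbf{Step 2 (reading off the PH level).}  For $k\ge 3$, the original sentence $S$ is equivalent to
$$Q_1\ts\x_1\;Q_2\ts\x_2\;\cdots\;Q_{k-2}\ts\x_{k-2}\,:\,\Psi(\x_1,\ldots,\x_{k-2}),$$
with $\Psi\in\poly$.  To interpret this as a $\SigmaP_{k-2}$ (or $\PiP_{k-2}$) predicate, we need polynomially bounded bit-length witnesses for every existential $\x_i$ block.  This follows from the standard small-model property for Presburger arithmetic in fixed dimension: any non-empty Presburger-definable subset of $\Z^{n_i}$ contains a point whose bit-length is polynomial in the size of any describing formula (see e.g.~\cite{W}).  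Combined with the parity count above, this yields the two claims of the proposition.

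\smallskip\noindent\textbf{Main obstacle.}  The only delicate step is extending Kannan's theorem, originally stated for a single linear system, to the Boolean combination $\Phi$ used in the definition of $\Pclass$.  Because both the Boolean shape and the bound $a$ on the number of inequalities are fixed, this extension reduces to bounded DNF expansion together with a fixed-dimension coverage test; neither step requires the KPT assumption underlying Theorems~A and~B, so the proposition holds unconditionally.
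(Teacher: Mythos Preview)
Your argument is correct and follows the same route as the paper: use Kannan's $\forall\exists$ result to decide the inner predicate $\Psi$ in polynomial time, then invoke polynomial witness bounds on the remaining $k-2$ quantifier blocks.  Two minor differences are worth noting.  First, the paper obtains the witness bounds by citing Gr\"adel~\cite{G} rather than a small-model property, and those bounds are needed for \emph{all} blocks, universal as well as existential---your Step~2 mentions only the existential ones, though the small-model property you cite does cover both.  Second, the paper's formulation of Kannan's theorem (Theorem~\ref{t:Kannan}) is already stated for arbitrary Boolean combinations $\Phi$, so your ``main obstacle'' does not arise in that framing; if one insists on starting from the single-system version in~\cite{K1}, the convex-hull trick of Lemma~\ref{red1} gives the extension cleanly, and is more concrete than the ``fixed-dimension coverage test'' you gesture at.
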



\begin{proof}[Proof of Proposition~\ref{p:Sigma}]
From a general result in \cite{G}, we know $\Pclass \in \SigmaP_{k} / \PiP_{k}$ when $k$ is odd/even because there are only a bounded number of quantified variables.
In other words, this says that for every $S \in \Pclass$, it suffices to verify $S$ for all $\x_{i}$ with coordinates $x_{i,j}$ less than $2^{\ell_{i}}$.
Here $\ell_{1},\dots,\ell_{k}$ are polynomial in $\phi(S)$ and can also be computed in polynomial time from $S$. Furthermore, given $(\x_{1},\dots,\x_{k-2})$, Theorem~\ref{t:Kannan} allows us to check whether $\for \x_{k-1}  \ex \x_{k} : \Phi(\x_{1},\dots,\x_{k})$ in polynomial time. Therefore, we get $\Pclass \in \SigmaP_{k-2} / \PiP_{k-2}$ if $k$ is odd/even.
\end{proof}


By the above proposition, to decide a statement $S \in \Pclass$, it is enough restrict the coordinates $x_{ij}$ in $\x_{i}$ to an interval $[0,2^{\ell_{i}})$.
Here $\ell_{1},\dots,{\ell_{k}}$ are polynomial in $\phi(S)$ and also computable in polynomial time given $S$.
We can change each quantifier $\ts Q_i \x_i \ts$ to $\ts \Qb_i \x_i \ts$, where the superscript ``b'' means that $\forall$/$\exists\ts \x_i \in [0,2^{\ell_{i}})^{n_{i}}$.
Thus, we can recast each class $\Pclass$ as consisting of polynomial size search problems:

\begin{equation}\label{Pbclass}
\P^{\bounded}_{k,\n,a} = \Big\{ S = \big[ \Qb_1 \x_{1} \; \Qb_2 \x_{2} \; \dots \; \exb \x_{k} \,:\, \Phi(\x_{1}, \dots, \x_{k}) \big] \Big\}.
\end{equation}
{}

\begin{lem}\label{red1}
For a sentence $S \in \Pbclass$ as in~\eqref{Pbclass}, we can convert~$\Phi$
to a short system $($conjunction$)$ of inequalities at the cost of increasing
the length~$\phi(S)$ by a polynomial factor, and increasing $n_{k}$ and $a$ by some constants.
\end{lem}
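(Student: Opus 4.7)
The plan is to replace the Boolean combination~$\Phi$ by a plain conjunction of inequalities by introducing a constant number of fresh $0/1$ variables, appended to the innermost existentially quantified vector~$\x_k$, and forcing them to act as clause indicators via the standard \emph{big-$M$} encoding. Because $S \in \Pbclass$, every variable lies in an explicitly bounded interval $[0, 2^{\ell_i})$ with $\ell_i$ polynomial in~$\phi(S)$, so every atomic affine form occurring in~$\Phi$ takes values in some interval $[-M, M]$ where the integer~$M$ has binary length polynomial in~$\phi(S)$ and is computable in polynomial time from the data of~$S$.

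First, I would normalize $\Phi$ by replacing each negated atomic literal $\neg(\la \le 0)$ with the equivalent integer inequality $-\la + 1 \le 0$ (using integrality of the $\x_i$), and then rewriting the result in disjunctive normal form $\Phi \equiv \Lambda_1 \lor \cdots \lor \Lambda_m$ with $m \le 2^a$. Each clause $\Lambda_r$ is a conjunction of at most $a$ standard-form literals $\la_{r,s}(\x) \le 0$. Next, I would introduce fresh variables $z_1, \ldots, z_m$, append them to~$\x_k$, constrain them by $0 \le z_r \le 1$ (so integrality of $\x_k$ forces $z_r \in \{0,1\}$), and add the single inequality $z_1 + \cdots + z_m \ge 1$. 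For every literal $\la_{r,s}(\x) \le 0$ of clause~$\Lambda_r$, I would add the big-$M$ constraint
$$
\la_{r,s}(\x) \;\le\; M\ts(1 - z_r),
$$
which is vacuous when $z_r = 0$ and forces $\la_{r,s}(\x) \le 0$ when $z_r = 1$. Calling the resulting conjunctive formula~$\Phi'$, the equivalence $\ex \x_k\,\Phi \,\Lra\, \ex \tilde{\x}_k\,\Phi'$ (where $\tilde{\x}_k$ denotes $\x_k$ extended by the $z_r$'s) is immediate clause-by-clause: a witness of some~$\Lambda_r$ extends by $z_r = 1$, $z_{r'} = 0$ for $r' \ne r$; conversely, any witness of~$\Phi'$ has some $z_r = 1$, which forces~$\Lambda_r$.

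It remains to check the three size accountings. The new formula~$\Phi'$ has at most $O(a\ts 2^a)$ linear inequalities, so $a$ grows by a constant depending only on the fixed parameter~$a$; the vector $\x_k$ grows by $m \le 2^a$ coordinates, so $n_k$ also grows by a constant; and all new coefficients are bounded by~$M$, hence have binary length polynomial in~$\phi(S)$, so $\phi(S)$ grows by a polynomial factor. I do not expect any serious obstacle: the only step that requires genuine care is producing a valid~$M$ from the coefficients of~$\Phi$ and the bounds $\ell_i$, but this is a routine estimate. The whole lemma amounts to a bookkeeping translation from propositional form to purely conjunctive (geometric) form, which is exactly what the subsequent applications of KPT require.
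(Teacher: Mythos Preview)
Your argument is correct, and it takes a genuinely different route from the paper. The paper also begins by writing $\Phi$ in DNF as a union of at most $2^{a}$ bounded polytopes $P_{1},\ldots,P_{t}\subset\R^{n}$, but instead of a big-$M$ encoding it lifts each $P_{j}$ to $R_{j}=(e_{j},P_{j})\subset\R^{t+n}$ and takes the convex hull $R=\conv(R_{1},\ldots,R_{t})$; an integer point $(\t,\x)\in R$ must lie in some $R_{j}$, which gives the equivalence. The facets of $R$ are then recovered from its vertices (possible in polynomial time because the ambient dimension is fixed), yielding a single short system.

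Your big-$M$ formulation is more elementary: it avoids any vertex/facet computation and gives an explicit system of $O(a\cdot 2^{a})$ inequalities, whereas the paper's convex-hull construction leads to a facet bound of $(2^{a}a^{n})^{n+2^{a}}$. Both approaches exploit boundedness of the variables---you to manufacture a valid $M$, the paper to guarantee each $P_{j}$ is a polytope with finitely many vertices---so neither works for the unbounded class $\Pclass$ directly. For the purposes of the rest of the paper (feeding a single conjunctive system into KPT) either construction suffices; yours is arguably the cleaner bookkeeping argument, while the paper's is the ``tight'' geometric one.
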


\begin{proof}
First let $n = n_{1} + \ldots + n_{k}$ and $\x = (\x_{1},\dots,\x_{k}) \in \zz^{n}$, we can rewrite $\Phi(\x_{1}, \dots, \x_{k})$ as a DNF:
\begin{equation}\label{DNF}
\Phi(\x_{1},\dots,\x_{k}) = (A_1 \x \le \b_{1}) \lor \dots \lor (A_{t} \x \le \b_{t}).
\end{equation}
Here each short system $A_{j} \x \le \b_{j}$ contains at most~$a$ inequalities and defines a polytope $P_j \subset \R^{n}$ (because each $\x_i$ is bounded). The the total number $t$ of such systems is also at most~$2^{a}$. So $\Phi$ defines a union of $t$ polytopes (intersecting $\Z^{n}$). We claim that there exists a polytope $R \subset \R^{m}$ with $m = t+n$ so that for every $\x \in \zz^{n}$, we have:
\begin{equation}\label{proj}
\x \in \bigcup_{i=1}^{t} P_j  \quad \iff \quad \ex \t \in \Z^t : (\t,\x) \in R.
\end{equation}
To see this, we first define
\[
R_{j} = (0,\dots,0,1_{j},0,\dots,0,P_{j}) \subset \R^{m}.
\]
Explicitly, each $R_{j}$ is $P_{j}$ augmented with $t-1$ coordinates $0$, and a coordinate $1$ in the $j$-th position.
Now we can define
\begin{equation}\label{convexhull}
R = \text{conv} \{ R_{1} , R_{2} , \dots , R_{t} \}.
\end{equation}
It is easy to see that every integer point $(\t,\x)$ in $R$ must be in some $R_{j}$, and vice versa. This establishes~\eqref{proj}.

The vertices of each $P_{j}$ can be computed in polynomial time from its facets.
The vertices of $R_{j}$ come directly from those of $P_{j}$.
The vertices of $R$ are all vertices of $R_{j}$ for $1 \le j \le t$.
The facets of $R$ can be computed in polynomial time from its vertices because the total dimension $m=n+t$ is bounded. So the polytope $R$ can be presented as
\[
A(\t,\x) \le \b
\]
with both $A$ and $\b$ computable in polynomial time.
The original sentence $S$ can now be written in an equivalent form:
\begin{equation}\label{onesystem}
\Qb_1 \x_{1} \; \Qb_2 \x_{2} \; \dots \; \forb \x_{k-1} \; \exb \wt \x_{k}  \,:\, A \wt \x  \le \b,
\end{equation}
where $\wt\x_{k} = (\x_{k},\t)$ and $\wt \x = (\x, \t)$.
By merging $\exb \x_{k}$ and $\exb \t$ to form $\exb \wt \x_{k}$, we get $n_{k} \gets n_{k} + t \le n_{k} + 2^{a}$.

Note that the system $A \wt \x \le \b$ is still short. This can be seen as follows. Each system in \eqref{DNF} contains at most~$a$ inequalities, so each $P_{j}$ has at most $a^{n}$ vertices.
Each $R_{j}$ has the same number of vertices as $P_{j}$. Thus, the polytope $R$ in \eqref{convexhull} has at most $t a^{n} \le 2^{a} a^{n}$ vertices. Therefore, the number of facets of $R \subset \R^{m}$ is at most
\[
\left( 2^{a} a^{n} \right)^{m} \le \left( 2^{a} a^{n} \right)^{n+2^{a}},
\]
which is a constant. Each facet of $R$ can be computed in polynomial time, so it also has a polynomial length description.

We conclude that both $n_{k}$ and $a$ are changed by contants depending only on $\n, a$ and~$k$. The new system of inequalities is short, and has length bounded by a polynomial factor.
\end{proof}

\begin{rem}\label{r:extra-dim}
The extra dimension for $\t$ in the above proof can actually be lowered to $a$.
Recall that there are at most $2^{a}$ polytopes $P_{i}$.
We can pick $2^{a}$ points $\cj{r}_{1},\dots,\cj{r}_{2^{a}} \in \{0,1\}^{a}$ and define
\[
R_{j} = (\cj{r}_{j}, P_{j}) \subset \R^{m},
\]
where $m$ is now $a+n$.
Notice that $\cj{r}_{1},\dots,\cj{r}_{2^{a}}$ are vertices of the $a$-dimensional unit cube, which has no interior integer points.
Therefore, the convex hull $R = \conv(R_{1},\dots,R_{t})$ still satisfies the property
\[
\y \in R \cap \Z^{m} \quad \iff \quad \y \in R_{j} \cap \Z^{m} \text { for some $j$}.
\]
\end{rem}

By the above lemma, at the cost of a polynomial factor, we can restrict our attention to the subclass of $\Pbclass$ for which the $\Phi$ is just a short system of inequalities.

\begin{lem}\label{red2}
Every short sentence $S \in \Pbclass$ of the form
$$
\Qbk{1} \; \Qbk{2} \; \dots \; \forb \x_{k-1} \; \exb \x_{k}  \,:\,  \Phi(\x_1, \dots, \x_k)
$$
is equivalent to a short sentence $S'$ of the form
\begin{equation}\label{red2_eq}
\Qb_1 y_1 \; \Qb_2 y_2 \; \dots \; \forb y_{k-1} \; \exb \y_k \,:\, \Psi(y_1, \dots, y_{k-1}, \y_{k}),
\end{equation}
where $y_1, \dots, y_{k-1}$ are singletons, $\y_k \in \Z^{m}$ with $m \le n_1 + \ldots + n_k$, and $\Psi$ is a short system of length polynomial in $\phi(S)$ that describes a polytope in $\R^{m+k-1}$.
\end{lem}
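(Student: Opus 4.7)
The plan is to encode each bounded vector variable $\x_i$ (for $i < k$) as a single integer $y_i$ via a base-$2^{\ell_i}$ digit expansion, and to relegate the linear decoding of all the $\x_i$'s into an auxiliary block of existential variables that I fold into the final $\y_k$. This exploits the fact that bit-packing is a linear bijection, so it preserves the quantifier semantics without disturbing the prefix.

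First, I would invoke Lemma~\ref{red1} to assume $\Phi$ is already a short conjunctive system of linear inequalities, and invoke Proposition~\ref{p:Sigma} to confine each $\x_i$ to the box $[0, 2^{\ell_i})^{n_i}$, with $\ell_i$ polynomial in $\phi(S)$. For $i < k$ I would define
$$y_i \,=\, \sum_{j=1}^{n_i} x_{i,j}\, 2^{\ell_i(j-1)} \,\in\, [0, 2^{\ell_i n_i}),$$
which gives a bijection between the two boxes. The new $\y_k$ would collect \emph{all} the original vector variables $(\x_1, \dots, \x_{k-1}, \x_k)$, so $m = n_1 + \ldots + n_k$, matching the claimed bound.

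I would then take $\Psi$ to be the conjunction of: the box bounds $0 \le x_{i,j} < 2^{\ell_i}$ on every coordinate; the encoding equations $y_i = \sum_j x_{i,j}\, 2^{\ell_i(j-1)}$ for each $i < k$ together with $0 \le y_i < 2^{\ell_i n_i}$; and the original short system $\Phi(\x_1, \ldots, \x_k)$. The total number of inequalities is bounded by a constant depending only on $k, \n, a$, and every coefficient has bit-length polynomial in $\phi(S)$, so $\Psi$ is indeed short and of polynomial length. The box constraints ensure boundedness, so $\Psi$ cuts out a polytope in $\R^{m+k-1}$.

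The point I would argue most carefully, and the main obstacle, is that the new quantifier prefix is semantically equivalent to the original. The decisive fact is that the encoding is bijective: for each $y_i \in [0, 2^{\ell_i n_i})$ there is a unique tuple $\x_i \in [0, 2^{\ell_i})^{n_i}$ satisfying the encoding equation. So whether $Q_i = \forall$ or $\exists$, quantifying $y_i$ over its range has exactly the same effect on the truth value as quantifying $\x_i$ over its box. At the terminal $\exb \y_k$, the components $\x_1, \dots, \x_{k-1}$ are uniquely forced by the prior choices of $y_1, \dots, y_{k-1}$, leaving $\x_k$ as the only genuinely existential choice at that stage, which mirrors the original $\exb \x_k$.
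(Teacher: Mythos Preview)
Your proposal is correct and follows essentially the same approach as the paper: encode each bounded vector $\x_i$ (for $i<k$) as a single integer via base-$2^{\ell_i}$ digit expansion, push all the original coordinates into the final existential block $\y_k$ of dimension $n_1+\cdots+n_k$, and adjoin the linear decoding equations and box bounds to $\Phi$ to form $\Psi$. The paper is terser on the bijection/quantifier-equivalence point that you spell out carefully, but the construction is identical.
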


\begin{proof}
Since all quantifiers are bounded, we can assume $0 \le x_{i,j} < 2^{\ell_{i}}$ for all coordinates $x_{i,j}$ in $\x_{i}$, where $1 \le i \le k, 1 \le j \le n_{i}$.
Therefore, we can uniquely represent each vector $\x_{i}$ by a single integer $y_{i}$, where
\[
y_{i} = x_{i,1} + 2^{\ell_{i}}x_{i,2} + \ldots + 2^{(n_{i}-1)\ell_{i}}x_{i,n_{i}}.
\]
Now each variable $y_{i}$ is bounded in the range $[0,2^{n_{i} \ell_{i}})$, and we can replace $\x_{i}$ by $y_{i}$ for all $1 \le i \le k-1$. However, in order to recover all the coordinates $x_{i,j}$ in the system~$\Phi$, we need to augment $\x_{k}$ by $(n_{1} + \ldots + n_{k-1})$ extra coordinates. So let $\y_{k} = (y_{k,1} \; , \dots, \; y_{k,m})$, where $m = n_{1} + \ldots + n_{k}$. We identify the last $n_{k}$ coordinates in $\y_{k}$ with those of $\x_{k}$. For the first $m-n_{k}$ coordinates of $\y_{k}$, we condition
\[
\begin{Bmatrix*}[l]
y_{1} &= &y_{k,1}  +  2^{\ell_{1}} y_{k,2} + \ldots + 2^{(n_{1} - 1) \ell_{1}} y_{k,n_{1}} \\
~~~~\\
y_{2} &= &y_{k,n_{1}+1} +  2^{\ell_{2}} y_{k,n_{1}+2} +\ldots + 2^{(n_{2} - 1) \ell_{2}} y_{k,n_{1}+n_{2}} \\
~ &\, \ts\vdots &~ \\
y_{k-1} &= &y_{k,n_{1} + \ldots + n_{k-2} +1} + \ldots + 2^{(n_{k-1} - 1) \ell_{k-1}} y_{k,n_{1}+\ldots+n_{k-1}} 
\end{Bmatrix*}.
\]
Besides, we require $0 \le y_{k,j} < 2^{\ell_{i}}$ for each $y_{k,j}$ in the $i$th row of the above system.
Adding all the above conditions (as linear inequalities) into the new system $\Phi$, where each variable $x_{i,j}$
is substituted by $y_{k, n_{1} + \ldots + n_{i-1} + j}$, we obtain an equivalent
short system $\Psi(y_{1}, \dots, y_{k-1}, \y_{k})$ of length $\text{poly}(\phi(S))$.
\end{proof}


Next, we disassociate $y_1,\dots,y_{k-2}$ from $\Psi(y_1, \dots, y_{k-1}, \y_{k})$ to obtain a system $\Lambda(y_{k-1}, \y_{k})$ in only the last two variables $y_{k-1}$ and $\y_{k}$. The following lemma shows this can be done at a cost of introducing extra relations $R_{1}(y_1,y_2), \dots, R_{k-2}(y_{k-2},y_{k-1})$, which are all short.

\begin{lem}\label{red3}
Every short sentence $S'$ of the form
\begin{equation*}
\Qb_1 y_1 \; \Qb_2 y_2 \; \dots \; \forb y_{k-1} \; \exb \y_k : \Psi(y_1, \dots, y_{k-1}, \y_{k})
\end{equation*}
is equivalent to another short sentence $S''$ of the form
\begin{equation}\label{type_1}
\aligned
\exb z_1 \; & \forb z_2 \; \lnot  R_1(z_1, z_2) \lor \Big[ \exb z_3 \; R_2(z_2,z_3) \land \bigl[ \dots \\ & \dots \lnot R_{k-2}(z_{k-2},z_{k-1}) \lor [ \exb \z_k \; \Lambda(z_{k-1}, \z_{k})  ] \dots \bigr] \;  \Big]
\endaligned
\end{equation}
if $k$ is odd, i.e., $\Qb_k = \exb$, or
\begin{equation}\label{type_2}
\aligned
\forb z_1 \; & \exb  z_2  \;  R_1(z_1, z_2)  \land \Big[ \forb z_3 \; \lnot R_2(z_2,z_3) \lor  \bigl[ \dots \\ & \dots \lnot R_{k-2}(z_{k-2},z_{k-1}) \lor [ \exb \z_k \; \Lambda(z_{k-1}, \z_{k})  ] \dots \bigr] \;  \Big]
\endaligned
\end{equation}
if $k$ is even, i.e., $\Qb_k = \forb$.


\noindent Here $R_{1}, \dots, R_{k-2}$ and $\Lambda$ are all short and quantifier free.
Also $\Lambda$ is a short system of inequalities with length $\polyin{\phi(S')}$.
\end{lem}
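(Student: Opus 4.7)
The approach is to introduce auxiliary singleton variables $z_{1},\dots,z_{k-1}$ that bit-pack the initial segments $(y_{1},\dots,y_{j})$ in a \emph{reversed} order of weights, so that consecutive encodings are related by a purely linear rule and no divisibility predicate is needed. Since each quantifier in $S'$ is bounded, fix an integer $L$, polynomial in $\phi(S')$, such that $y_{i}\in[0,2^{L})$ for $1\le i\le k-1$, and set
\[
z_{j} \ := \ 2^{(j-1)L}\, y_{1} \, + \, 2^{(j-2)L}\, y_{2} \, + \, \dots \, + \, 2^{L}\, y_{j-1} \, + \, y_{j} \qquad (1\le j\le k-1).
\]
Then $z_{j}\in[0,2^{jL})$, and we take $\Qb_{j}z_{j}$ to range over this interval, with alternation inherited from that on $y_{j}$.

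From the identity $z_{j+1}=2^{L}z_{j}+y_{j+1}$ with $y_{j+1}\in[0,2^{L})$, the relation
\[
R_{j}(z_{j},z_{j+1}) \ := \ \bigl(\, 2^{L}z_{j}\le z_{j+1}\,\bigr) \,\wedge\, \bigl(\, z_{j+1}\le 2^{L}z_{j}+2^{L}-1 \,\bigr)
\]
is a short quantifier-free conjunction of two linear inequalities in the singletons $z_{j},z_{j+1}$. At the innermost level, set $\z_{k}:=(\y_{k},w_{1},\dots,w_{k-1})$ with new integer unknowns $w_{i}$, and let $\Lambda(z_{k-1},\z_{k})$ be the conjunction of the decoding equality
\[
z_{k-1} \ = \ 2^{(k-2)L}\, w_{1} \, + \, 2^{(k-3)L}\, w_{2} \, + \, \dots \, + \, 2^{L}\, w_{k-2} \, + \, w_{k-1},
\]
the box constraints $0\le w_{i}\le 2^{L}-1$ for $1\le i\le k-1$, and the substituted system $\Psi(w_{1},\dots,w_{k-1},\y_{k})$. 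This is a short system of linear inequalities of length $\polyin{\phi(S')}$.

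Equivalence of $S'$ and the resulting $S''$ is then verified by unpacking the restricted-quantifier patterns. For odd $k$ the outermost $\exb z_{1}$ plays the role of $\exb y_{1}$ via $z_{1}=y_{1}\in[0,2^{L})$. In the following $\forb z_{2}$ block, any $z_{2}$ with $\neg R_{1}(z_{1},z_{2})$ makes the disjunction vacuously true, while the remaining $z_{2}$'s biject with $y_{2}\in[0,2^{L})$ via $z_{2}=2^{L}z_{1}+y_{2}$; iterating the patterns $\forb z\,\colon\,\neg R\lor(\cdots)$ and $\exb z\,\colon\,R\wedge(\cdots)$ reproduces the original alternation over $y_{1},\dots,y_{k-1}$. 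Uniqueness of the base-$2^{L}$ representation then forces the decoding in $\Lambda$ to yield $(w_{1},\dots,w_{k-1})=(y_{1},\dots,y_{k-1})$, so the substituted $\Psi$ coincides with the original. The even $k$ case is analogous, with the outermost $\forb z_{1}$ matching $\forb y_{1}$ directly (no preceding guard is needed).

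The main obstacle is to present each $R_{j}$ as simultaneously \emph{quantifier-free}, \emph{short}, and a formula in two singletons: a naive forward bit-packing $z_{j}=\sum_{i}2^{(i-1)L}y_{i}$ would encode compatibility as the divisibility $2^{jL}\mid(z_{j+1}-z_{j})$, which cannot be expressed as a bounded Boolean combination of linear inequalities without reintroducing an extra quantifier. The reversed packing above defuses this by recasting compatibility as the affine interval $2^{L}z_{j}\le z_{j+1}\le 2^{L}z_{j}+2^{L}-1$. The remaining size bookkeeping---that $L$, the coefficients $2^{jL}$ and $2^{(k-1-i)L}$, and the bounds on $z_{j}$ all have bit-length $\polyin{\phi(S')}$, and that the dimension of $\z_{k}$ together with the number of inequalities in $R_{j}$ and $\Lambda$ remain bounded by constants depending only on $k,\n,a$---is routine.
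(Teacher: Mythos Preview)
Your proposal is correct and follows essentially the same route as the paper. The paper likewise encodes $(y_{1},\dots,y_{j})$ into a single integer $z_{j}$ with $y_{1}$ in the most significant block, defines $R_{j}$ by the floor relation $z_{j}=\lfloor z_{j+1}/2^{\ell_{j+1}}\rfloor$ (equivalently your pair of inequalities $2^{L}z_{j}\le z_{j+1}\le 2^{L}z_{j}+2^{L}-1$), and builds $\Lambda$ by adjoining $k-1$ extra coordinates to $\y_{k}$ together with the base-$2^{L}$ decoding equality, box constraints, and the substituted $\Psi$. The only cosmetic difference is that you use a single uniform bound $L$ while the paper keeps individual lengths $\ell_{i}$; your ``reversed packing'' discussion is not a departure from the paper but exactly the same choice it makes.
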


\begin{proof}
By the bounded quantifiers, we have $y_{i} \in [0,2^{\ell_{i}})$ for $1 \le i \le k-1$ and $y_{k,j} \in [0,2^{\ell_{k}})$ for $1 \le j \le n_{k}$. We will make new variables $z_{1}, \dots, z_{k-1}$ and condition them so that each $z_{i}$ express $\. y_{1},\dots,y_{i} \.$ concatenated in binary. We identify $z_{1}$ with $y_{1}$.
For $z_{2}$, we concatenate $y_{1}$ and $y_{2}$.
This just means that $z_{2}$ has $\ell_{1}+\ell_{2}$ binary digits, with the first (most significant) $\ell_{1}$ digits from~$y_{1}$ (now $z_{1}$), and the last (least significant) $\ell_{2}$ digits from $y_{2}$.
In other words, we have $z_{1} = \floor{z_{2} / 2^{\ell_{2}}}$.
So the first condition $R_{1}(z_{1},z_{2})$ is:
\[
R_{1}(z_{1}, z_{2}) : z_{1} = \floor{z_{2} / 2^{\ell_{2}}} \quad \iff \quad
\bigg\{
\begin{matrix*}[l]
z_{1} \le z_{2} / 2^{\ell_{2}} \\
z_{1} > z_{2} / 2^{\ell_{2}} - 1
\end{matrix*}
\;\;.
\]
In general, if $t_{j} = \ell_{1} + \dots + \ell_{j}$, then for any $1 \le j \le k-2$, the variable $z_{j+1}$ has its first $t_{j}$ binary digits from $z_{j}$, and an extra $\ell_{j+1}$ last digits. This is again guaranteed by enforcing:
\[
R_{j}(z_{j}, z_{j+1}) : z_{j} = \floor{z_{j+1} / 2^{\ell_{j+1}}} \quad \iff \quad
\bigg\{
\begin{matrix*}[l]
z_{j} \le z_{j+1} / 2^{\ell_{j+1}} \\
z_{j} > z_{j+1} / 2^{\ell_{j+1}} - 1
\end{matrix*}
\;\;.
\]
So now, if $R_{1}(z_{1},z_{2}),\dots,R_{k-2}(z_{k-2},z_{k-1})$ are all satisfied, then $z_{k-1}$ has $t_{k-1}$ digits corresponding to all digits from $y_{1},\dots,y_{k-1}$ concatenated.
If $\y_{k}$ has $n_{k}$ coordinates, we let $\z_{k}$ have $(k-1)+n_{k}$ coordinates. The last $n_k$ coordinates in $\z_{k}$ correspond to those in $\y_{k}$. The first $k-1$ coordinates in $\z_{k}$ are needed to recover $y_{1},\dots,y_{k-1}$ from $z_{k-1}$.
This is achieved by conditioning:
\begin{equation}\label{decompression}
z_{k-1} \. =  \. 2^{\ell_{2}+\dots+\ell_{k-1}} z_{k,1}  + 2^{\ell_{3}+ \. \dots \. + \ell_{k-1}} z_{k,2} + \dots
 \.\ldots \.  +  2^{\ell_{k-1}} z_{k,k-2} + z_{k,k-1},
\end{equation}
and
\begin{equation}\label{eq:z_bound}
\aligned
0 \le z_{k,1} < 2^{\ell_{1}} ,\; & \dots ,\; 0 \le z_{k,k-1}  < 2^{\ell_{k-1}} ,\; \\ 0 \le z_{k,k} \; , \, & \dots , \; z_{k,k-1+n_{k}} < 2^{\ell_{k}}.
\endaligned
\end{equation}

The whole system $\Psi(y_{1}, \dots, y_{k-1}, \y_{k})$ can now be expressed in $z_{k-1}$ and $\z_{k}$.
Indeed, we first rewrite the system $\Psi(y_{1}, \dots, y_{k-1}, \y_{k})$ with $$z_{k,1}\; ,\dots, \; z_{k,k-1}, \; z_{k,k}, \; \dots, \;z_{k,k-1+n_k}$$ in place of $$y_{1}, \; \dots, \; y_{k-1}, \;y_{k,1}, \; \dots, \; y_{k,n_k} \..$$
Now we let $\Lambda(z_{k-1}, \z_{k})$ be a new system including \eqref{decompression},~\eqref{eq:z_bound} and $\Psi$.
It is clear that $\Psi(y_{1}, \dots, y_{k-1}, \y_{k})$ holds if and only if $\Lambda(z_{k-1}, \z_{k})$ holds.
It is also clear that the new sentence $S''$ as in \eqref{type_1} or \eqref{type_2} has length $\polyin{\phi(S')}$ and is equivalent to the original sentence $S'$.
Note that $z_{1},\dots,z_{k-1}$ now have length bounds $t_{1} < \dots < t_{k-1}$, i.e., we require $0 \le z_{j} < 2^{t_{j}}$ for each of the first $k-1$ quantifier.
\end{proof}

Combining lemmas~\ref{red1}, \ref{red2}, and~\ref{red3}, we conclude that every sentence $S \in \Pbclass$ is equivalent to
a sentence $S''$ of the form~\eqref{type_1} or~\eqref{type_2} in some other class $\P^{\textup{b}}_{k,\n',a'}$.
The first $k-1$ variables in $S''$ are now singletons and the system $\Lambda(z_{k-1},\z_{k})$ involves only
the last two variables $z_{k-1}$ and $\z_{k}$. We say that such short Presburger sentences $S''$ are
in \emph{disassociated form}.

In order to prove Theorem~\ref{th:main_1}, we need Kannan's Partition Theorem.
Adopting the terminology in~\cite{K1}, we call a polyhedron with possibly some open facets a \emph{copolyhedron}.

\begin{theo}[Kannan's partition theorem]\label{KannanPartition}
Fix $n$ and $q$.
Consider a matrix $A \in \zz^{m \times n}$ of binary length $\phi$ and a $q$-dimensional polyhedron $W \subseteq \rr^{m}$.
For every $\b \in W$, let $K_{\b} = \{\x \in \R^{n} : A\x \le \b\}$. Assume that $K_{\b}$ is bounded for all $\b \in \R^{m}$. Then one can find in polynomial time a partition
\[
W \. = \. P_1 \sqcup P_2  \sqcup \dots \sqcup P_r,
\]
with $r \le (m n \phi)^{q n^{\delta n}}$, $\delta$ a universal constant, and each $P_{i}$ is a rational copolyhedron with the following properties.
For each $P_i$, $1\le i \le r$,  one can find in polynomial time a finite set $\T_i = \big\{ (T_{ij}, T'_{ij}) \big\}$ of pairs of rational affine transformations $T_{ij} : \R^{m} \to \R^{n}$ and $T'_{ij} : \Z^{n} \to \Z^{n}$, such that for every $\b \in P_i$, we have:
\begin{equation}\label{KannanCandidates}
K_{\b} \cap \Z^{n} \neq \varnothing \;\; \iff \;\; \ex (T_{ij}, T'_{ij}) \in \T_{i} \; : \; T'_{ij} \floor{ T_{ij} \b } \in K_{\b} \,.
\end{equation}
Furthermore, the size $\bigl|\T_{i}\bigr|\le n^{4n}$, for all $1\le i \le r$.
\end{theo}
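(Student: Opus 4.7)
The plan is to prove the theorem by induction on the dimension $n$ of the variable $\x$, following a parametric version of Lenstra's integer feasibility algorithm. The base case $n=1$ is direct: $K_\b$ is an interval, and after refining $W$ by which pair of inequalities realizes the upper and lower endpoints we obtain $O(m^2)$ copolyhedral cells on each of which both endpoints are linear functions of~$\b$; a single test point (the ceiling of the lower endpoint, implemented as $T'_{ij}\floor{T_{ij}\b}$) then decides feasibility on that cell.

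For the inductive step I invoke Khinchin's flatness theorem: there is a constant $f(n)$ such that every convex body in $\R^n$ without lattice points has lattice width at most $f(n)$. Applied to $K_\b$, this says that either $K_\b$ already contains an integer point or some primitive $\v\in\zz^n$ satisfies $w(K_\b,\v)\le f(n)$, where $w(K_\b,\v)=\max_{\x\in K_\b}\langle\v,\x\rangle-\min_{\x\in K_\b}\langle\v,\x\rangle$. A Lenstra/LLL-style argument on the polar lattice of an inscribed ellipsoidal approximation of $K_\b$ shows that such a $\v$ can always be chosen from a \emph{finite} set $V\subset\zz^n$ whose cardinality and bit-lengths depend only on $n$ and on~$\phi$, and not on the particular~$\b$.

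The parametric heart of the argument is then the partitioning step. Each width $w(K_\b,\v)$ is piecewise-linear in~$\b$, because the optimizing vertex of $K_\b$ in direction~$\v$ changes polyhedrally as $\b$ varies (classical linear-programming sensitivity, governed by the chamber complex of the projection $(\b,\x)\mapsto\b$). Intersecting and comparing these pieces across $\v\in V$ refines $W$ into copolyhedral cells, on each of which one fixed $\v\in V$ realizes the minimum width. On cells where that minimum already exceeds $f(n)$, the polytope $K_\b$ contains an integer point iff one of the Lenstra ``rounded'' candidates does, producing direct test points. On the remaining cells we slice $K_\b$ by the $f(n)+1$ affine hyperplanes $\{\langle\v,\x\rangle=c\}$ for $c\in\zz$ in the width interval; each slice is an $(n-1)$-dimensional integer feasibility problem with parameter $\b$ augmented by a discrete index~$c$, and the inductive hypothesis supplies its own sub-partition and test-point maps. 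Lifting each sub-test-point through the affine inclusion of the slice back into $\R^n$ yields the required affine maps $T_{ij}\colon\R^m\to\R^n$ and $T'_{ij}\colon\zz^n\to\zz^n$ fulfilling~\eqref{KannanCandidates}.

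The main obstacle will be controlling the combinatorial blow-up through the $n$ recursive levels so that the final partition count matches $r\le(mn\phi)^{qn^{\delta n}}$ and the test-point count matches $|\T_i|\le n^{4n}$. Careful bookkeeping is needed: at each recursive level only the vertex-structure refinement of the chamber complex contributes factors involving~$\phi$, while slicing multiplies the test-point count only by the constant $f(n)+1$ per level. Iterating $n$ times yields $\prod_{i\le n}(f(i)+1)\le n^{4n}$ after pruning pairs of maps that agree after composition with $\floor{\cdot}$; the corresponding bound on the number of copolyhedral cells, uniform in $\b$ over the $q$-dimensional $W$, follows from an inductive bound on the chamber-complex complexity of the iterated projections arising in the slicing construction.
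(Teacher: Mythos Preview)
The paper does not contain a proof of this theorem. Kannan's Partition Theorem is merely \emph{stated} here and attributed to~\cite{K1}; the authors use it throughout as a black box, and in~\S\ref{ss:finrem-Kannan} they explicitly write that they ``were unable to piece together all the details which go into the proof of KPT'' and therefore treat it as an assumption (this is why Theorems~A and~B are phrased ``Assuming KPT''). So there is no proof in the paper for your attempt to be compared against.

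As for your sketch on its own merits: the overall architecture --- induction on~$n$, flatness theorem to obtain a thin direction, chamber decomposition of the parameter space to make the optimizing vertices and hence the widths piecewise linear in~$\b$, then recurse on hyperplane slices --- is indeed the shape of Kannan's argument in~\cite{K1,K2}. The difficulty, which the authors of the present paper themselves flag, is precisely in the steps you pass over lightly. In particular, your assertion that the flat direction~$\v$ can be taken from a finite set~$V$ depending only on~$n$ and~$\phi$ and \emph{not} on~$\b$ is the crux: in Lenstra's algorithm the direction comes from an LLL-reduced basis of a lattice that depends on the body, hence on~$\b$, and making this choice uniform across a copolyhedral cell of~$W$ while keeping the cell count under control is exactly where Kannan's proof becomes delicate. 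Likewise, your bookkeeping for the cell count $(mn\phi)^{qn^{\delta n}}$ and the test-set bound $n^{4n}$ is asserted rather than carried out; iterating the chamber refinement through~$n$ levels while tracking how the bit-length~$\phi$ grows at each slice is nontrivial. Your outline is a reasonable plan for reconstructing Kannan's proof, but it does not yet close the gaps that led the present authors to hedge their reliance on the result.
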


\begin{rem}{\rm
If the number of rows $m$ in $A$ is fixed, each condition $T'_{ij} \floor{ T_{ij} \b } \in K_{\b}$ can be expressed as a short Boolean combination of linear inequalities, at the cost of introducing a few extra $\ex$ or $\for$ quantifiers. For example, the condition $\frac{1}{2} + \floor{b/5} \le 3$ for $b \in \R$ can be expressed as either
\begin{equation}\label{floor}
\exists\ts t
\begin{Bmatrix}\,
t &\le &b/5\\
t &> &b/5 - 1\\
\, \frac{1}{2} + t &\le &3\,
\end{Bmatrix}
\quad \text{or} \quad
\for\ts t
\begin{bmatrix} \,
t &> &b/5\\
t &\le &b/5 - 1\\
\, \frac{1}{2} + t &\le &3 \,
\end{bmatrix}.
\end{equation}
Here $\{\cdot\}$ is a conjuction and $\left[\cdot\right]$ is a disjunction.
}\end{rem}

\begin{theo}[Kannan]\label{t:Kannan}
Short sentences $\ts\for \ts \y \. \ex \ts \x \; \Phi(\x,\y)\ts$ in every fixed class $\P_{2,\n,a}$ can be decided in polynomial time.
\end{theo}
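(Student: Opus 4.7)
The plan is to follow Kannan's original strategy, using KPT as the main engine. First I would reduce to the bounded case: applying the general Presburger bounds cited in Proposition~\ref{p:Sigma} (which do not themselves depend on Theorem~\ref{t:Kannan}), the outer and inner variables can be restricted to cubes $[0,2^{\ell_1})^{n_1}$ and $[0,2^{\ell_2})^{n_2}$ of polynomial-bit side. Then I would apply Lemma~\ref{red1} to rewrite $\Phi$ as a single short system $A\x+B\y\le \b$, at the cost of enlarging $n_2$ and $a$ by constants.

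Next I would view $\x$ as the genuine variable and treat $\b-B\y$ as a varying right-hand side for the augmented system $A'\x\le \b''(\y)$, where $A'$ incorporates the bounds $0\le \x_j<2^{\ell_2}$ so that $K_{\b''}$ is automatically bounded, as required by KPT. As $\y$ ranges over the $\y$-cube, $\b''(\y)$ sweeps out an affine image $W\subset \R^m$ of that cube, which is a polytope of dimension $\le n_1$ (bounded). KPT then produces in polynomial time a partition $W=P_1\sqcup\cdots\sqcup P_r$ with $r$ polynomial in $\phi(S)$, together with test-point packages $\T_i$ of bounded size $n_2^{4n_2}$. Pulling the partition back, $Q_i:=\{\y:\b''(\y)\in P_i\}$ is a copolyhedron in $\R^{n_1}$, and the $\y$-cube is the disjoint union of the $Q_i$.

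By KPT, for each $\y\in Q_i$ the inner $\exists\x$ is equivalent to the finite disjunction $\bigvee_{j=1}^{|\T_i|}\big(T'_{ij}\lfloor T_{ij}\b''(\y)\rfloor\in K_{\b''(\y)}\big)$. The original sentence thus holds iff for every $i$ this disjunction holds at every $\y\in Q_i\cap\Z^{n_1}$. Equivalently, it fails iff for some $i$ there is an integer $\y\in Q_i$ satisfying the conjunction of the $|\T_i|$ negations. For each fixed $i$ this is an integer feasibility question in the fixed dimension $n_1$ for a short Boolean combination of linear inequalities: each floor is absorbed by a bounded conjunction in one auxiliary integer variable as in the remark after KPT (raising the dimension by a bounded constant), and expanding into DNF yields a bounded number of short integer programs, each solved in polynomial time by Lenstra's algorithm. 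Since $r$ is polynomial and everything else is bounded, the total running time is polynomial in $\phi(S)$.

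The main obstacle I anticipate is the careful bookkeeping at the interface with KPT: (i) verifying that $W$ really is a polytope of bounded dimension so that KPT contributes only polynomially many pieces $P_i$; (ii) translating each floor-based test-point membership cleanly into a short Boolean combination of linear inequalities without blowing up the dimension; and (iii) ensuring that the reduction from $\P_{2,\n,a}$ to $\P^{\bounded}_{2,\n,a}$ appeals only to general Presburger bounds and not circularly to Theorem~\ref{t:Kannan} itself.
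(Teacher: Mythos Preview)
Your proposal is correct and follows essentially the same strategy as the paper's own treatment: the paper does not give a full proof of Theorem~\ref{t:Kannan} but only a one-paragraph remark sketching exactly this route (partition the $\y$-space via KPT, replace $\exb\x$ by the bounded test-point disjunction expressed with an auxiliary $\forall\t$ as in~\eqref{floor}, combine with the outer $\forall\y$, and finish with Lenstra), referring to~\cite{K1} for details. Your worry~(iii) about circularity is well-placed but harmless: the polynomial length bounds $\ell_i$ used to pass to $\Pbclass$ come from Gr\"adel's result~\cite{G} cited in the proof of Proposition~\ref{p:Sigma}, not from Theorem~\ref{t:Kannan} itself, which is invoked there only to sharpen the hierarchy level.
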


\begin{rem}{\rm
The idea of Theorem~\ref{t:Kannan}'s proof is to first partition the parameter space $\R^{n_{1}}$ for $\y$  into polynomially many copolyhedra using Theorem~\ref{KannanPartition}. For each copolyhedron, we have a finite set of candidates for $\x$, expressible using an extra quantifier $\for \t$ as in \eqref{floor}, which is then combined with the outer $\for \y$ quantifier. For the full proof, see~\cite{K1}. See also $\S$\ref{LenstraKannanShort} for a related remark.}
\end{rem}

\begin{proof}[Proof of Theorem~\ref{th:main_1}]
Consider a short disassociated Presburger sentence $S$ with variables $z_1,\dots,z_{k-1},\z_{k}$ of the form \eqref{type_1} or \eqref{type_2}. We induct on $k$, with the base case $k=2$ being Theorem~\ref{t:Kannan}. Now assume that for a fixed $k$ and every $\n', a'$, sentences in $\P_{k-1,\n',a'}$ are decidable in polynomial time. For convenience, we assume $k$ is odd; the case $k$ even is analogous. Then $S$ has the form:
\begin{equation}\label{disassociated}
\aligned
\exb z_1 \; \forb z_2 \; &  \lnot R_1(z_1, z_2) \lor \Bigl[ \exb z_3 \;  R_2(z_2,z_3) \land \bigl[ \. \ldots  \\
\dots &  \forb z_{k-1} \;  \lnot R_{k-2}(z_{k-2},z_{k-1}) \lor [ \exb \z_k \; \Lambda(z_{k-1}, \z_{k})  ] \dots  \bigr] \; \Bigr].
\endaligned
\end{equation}

Notice that the last system $\exb \z_{k} \, \Lambda(z_{k-1}, \z_{k})$ has fixed dimensions. If the system has $m$ inequalities, which is at most a constant, we can rewrite it  as
\[
\exb \z_{k} : A\z_{k} \le \albar z_{k-1} + \nubar \quad \text{with} \quad A \in \Z^{m \times n_{k}},\, \albar,\nubar \in \Z^{m}.
\]
For convenience, let $n = n_{k}$.
For each $z_{k-1}$, let $\b = \albar z_{k-1} + \nubar \in \rr^{m}$ and $$K_{z_{k-1}} \coloneqq \{\z_{k} \in \rr^{n} : \Lambda(z_{k-1},\z_{k})\} = \{\z_{k} \in \rr^{n} : A\z_{k} \le \b\}.$$
The set of all such $\b$ lies in a $1$-dimensional polyhedron in $W \subseteq \rr^{m}$.
We apply Theorem~\ref{KannanPartition} to the system $A\z_{k} \le \b$ with variables $\z_{k}$ and parameters $\b$. 
Theorem~\ref{KannanPartition} gives a polynomial size partition $W = P_{1} \sqcup \dots \sqcup P_{r}$, where $W$ is the set of all possible $\b$ as $z_{k-1}$ varies over $\rr$. 
This in turn induces a partition of $\R$, the parameter space for $z_{k-1}$, into
\begin{equation}\label{Rpartition}
\R = R_{1} \sqcup \dots \sqcup R_{r},
\end{equation}
where every $R_{i}$ is a rational interval.\footnote{Each $R_{i}$ can be half open with rational end points. Even though this forms a partition of $\R$, we only consider integer values in each $R_{i}$ for $z_{k-1}$.}
Since $\b = \cj\al z_{k-1} + \cj \nu$ depends affinely on $z_{k-1}$, by \eqref{KannanCandidates}, we have for each interval $R_i$ a constant size collection $\T_{i} = \{(T_{ij},T'_{ij})\}$ of pairs of rational affine maps $T_{ij} : \R \to \R^{n}$ and $T'_{ij} : \Z^{n} \to \Z^{n}$, so that for every $z_{k-1} \in R_{i}$ we have:
\begin{equation}\label{intermediate}
\begin{aligned}
\exb \z_{k} \, \Lambda(z_{k-1},\z_{k}) \; &\iff \; \ex (T_{ij}, T'_{ij}) \in \T_{i} \, : \, T'_{ij} \floor{ T_{ij} (z_{k-1}) } \in K_{z_{k-1}} \\
& \iff \, \bigvee_{j}  A \. T'_{ij} \floor{ T_{ij}(\albar z_{k-1} + \nubar) } \le \albar z_{k-1} + \nubar \\
& \iff \, \bigvee_{j} \for \t_j
\begin{bmatrix}
\t_j \neq \floor{ T_{ij}(\albar z_{k-1} + \nubar) } \\
~\\
A \. T'_{ij} \t_j \le \albar z_{k-1} + \nubar
\end{bmatrix}
,
\end{aligned}
\end{equation}
where the disjunction is over all $j$ such that $(T_{ij},T'_{ij}) \in \T_{i}$.

Here we are expressing the condition $\t_j \neq \floor{ T_{ij}(\albar z_{k-1} + \nubar) }$ using a short disjunction after $\for t$ as in \eqref{floor}. We have to do this for all coordinates $\. t_{j,1}, \. \dots \. ,t_{j,n} \ts$. The next step is to bring all the quantifiers $\forall \t_j$  outside of the short disjunction $\bigvee_{j}$ in \eqref{intermediate}. We can concatenate all $\t_{j}$'s into another vector~$\u$. Thus, for every $z_{k-1} \in R_{i}$, we have:
\begin{equation}\label{candidates}
\exb \z_{k} \; \Lambda(z_{k-1},\z_{k}) \quad \iff \quad \for \u \bigvee_{j}
\begin{bmatrix}
\u_{j} \neq \floor{ T_{ij}(\albar z_{k-1} + \nubar) } \\
~\\
A \. T'_{ij} \u_{j} \le \albar z_{k-1} + \nubar
\end{bmatrix}.
\end{equation}
Notice that $\u$ still has bounded dimension, because the number of pairs $(T_{ij},T'_{ij}) \in \T_{i}$ is at most $n^{4n}$. Also, the whole expression after $\for \u$ is still short.
\smallskip

Now comes the benefit of having $z_{1},\dots,z_{k-2}$ disassociated from $\Lambda(z_{k-1},\z_{k})$. Let us recall the proof of Lemma~\ref{red3}.
In there, the variables $z_{1},\dots,z_{k-1}$ have length bounds $t_{1} < \dots < t_{k-1}$.
For every $1 \le j \le k-2$, the relation $R_{j}(z_{j}, z_{j+1})$ forces $z_{j+1}$ to carry all the binary digits of $z_{j}$ as its first (most significant) $t_{j}$ binary digits. So if all $\. R_{1}(z_{1},z_{2}), \dots, R_{k-2}(z_{k-2},z_{k-1}) \.$ are all satisfied, then out of the $t_{k-1}$ digits of $z_{k-1}$, the first $t_{1}$ digits are from $z_{1}$. For particular value of $z_{1}$ in the range $[0,2^{t_{1}})$, every such $z_{k-1}$ lies in a contiguous segment of length $2^{t_{k-1}-t_{1}}$. To be precise, for every $z_{1} \in [0,2^{t_{1}})$, we have
\[
z_{k-1} \in I_{z_{1}} \coloneqq \bigl[ z_{1} 2^{t_{k-1}-t_{1}}, \; (z_{1}+1) 2^{t_{k-1}-t_{1}}  \bigr).
\]
There are $2^{t_{1}}$ such segments $I_{z_{1}}$, one for each $z_{1} \in [0,2^{t_{1}})$. However, by \eqref{Rpartition}, the domain $\R$ for $z_{k-1}$ was partitioned into $r$ (rational) segments $\. R_{1} \sqcup \dots \sqcup R_{r} \.$, where $r$ is polynomial in $\phi(S)$. Therefore, at most a polynomial number of intervals $I_{z_{1}}$ overlap with more than one interval $R_{i}$. We partition the interval $[0,2^{t_{1}})$ of all possible $z_{1}$ values into two subsets:
\begin{equation}\label{breakup}
\begin{aligned}
&\F_{1} = \big \{z_{1} \in [0,2^{t_{1}}) : I_{z_{1}} \subseteq R_{i} \text{ for some } 1 \le i \le r \big\} \quad \ \text{and} \\
 &\F_{2} = \big \{z_{1} \in [0,2^{t_{1}}) : I_{z_{1}} \text{ intersects both }  R_{i} \text{ and } R_{i+1} \  \text{ for some } \. i \big\}.
\end{aligned}
\end{equation}
In other words, $\F_{1}$ contains every interval $I_{z_{1}}$ that lies completely inside some interval $R_{i}$, and $\F_{2}$ contains the rest.
Observe that $|\F_{2}| \le r = \polyin{\phi(S)}$.
This is because the intervals $I_{z_{1}}$ are disjoint for different values of $z_{1}$, and if $z_{1} \in \F_{2}$ then $I_{z_{1}}$ must contain the common end point of $R_{i}$ and $R_{i+1}$ for some $1 \le i \le r$.

The original sentence $S$ begins with $\exb z_{1}$. First, we check over all values $z_{1} \in \F_{2}$. Substituting any such $z_{1}$ value into $S$, we get another short sentence with \emph{\textbf{one quantifier less}}, i.e., a sentence in some class $\P^{\textup{b}}_{k-1,\n',a'}$. By induction, each such sentence is polynomial time decidable. In summary, we can check whether any $z_{1} \in \F_{2}$ satisfies $S$, in time $\polyin{\phi(S)}$.

For $z_{1} \in \F_{1}$, recall by Theorem~\ref{KannanPartition} that one can find $R_{1},\dots,R_{r}$ in polynomial time. Thus, we can subpartition $\F_{1}$ into $r$ parts:
\begin{equation}\label{subinterval}
\F_{1} \. = \. \bigsqcup_{i=1}^{r}  \F_{1,i} \quad \text{where} \quad \F_{1,i} \.=\.  \bigl\{z_{1} \in \F_{1} : I_{z_{1}} \subseteq R_{i}\bigr\},\; 1\le i \le r.
\end{equation}
Note that each $\F_{1,i}$ is a contiguous subinterval in $[0,2^{t_{1}})$. For each $\F_{1,i}$, we can iteratively check if any $z_{1} \in \F_{1,i}$ satisfies $S$ as follows. For a fixed $i$ and all $z_{1} \in \F_{1,i}$, we have $z_{k-1} \in I_{z_{1}} \subseteq R_i$.
Therefore, by~\eqref{candidates}, the final quantifier $\exb \z_{k} \; \Lambda(z_{k-1},\z_{k})$ can be replaced by $\. \forb \u \; \Gamma_{i}(z_{k-1},\u).$
 Here $\Gamma_{i}$ as given by the RHS in \eqref{candidates} depends on $i$ but is still short. So now in \eqref{disassociated} we can combine $\forall z_{k-1}$ and $\forall \u$ together and get
\begin{equation}\label{reduce}
\aligned
\exb (z_{1} \in  \F_{1,i} ) \; \forb z_2 \; \lnot R_1(z_1, z_2) \lor \Bigl[ \exb z_3 \;  R_2(z_2,z_3) \land  \big[ \dots \\
 \dots  \forb z_{k-1} \forb \u \; \; \lnot R_{k-2}(z_{k-2},z_{k-1}) \;
 \lor \;  \Gamma_i(z_{k-1},\u) \dots \big] \; \Big].
\endaligned
\end{equation}
The quantifiers $\forb z_{k-1}$ and $\forb \u$ can be combined as $\forb (\z_{k-1},\u)$.
This results in a short sentence in some class $\lnot \P^{\textup{b}}_{k-1,\n'',a''}$ (negated because the last quantifier is $\forb$).
By the inductive assumption, we can check this sentence in polynomial time. In summary, we can check the sentence~\eqref{reduce} in polynomial time for each $1 \le i \le r$. Since $r$ is polynomial in $\phi(S)$, we can check the whole set $\F_{1}$ in time $\polyin{\phi(S)}$.
\smallskip

The case of even $k$ follows verbatim, with $\F_{2}$ consisting of subproblems in some class $\P^{\textup{b}}_{k-1,\n',a'}$ and $\F_{1}$ consisting of subproblems in some other class $\lnot\P^{\textup{b}}_{k-1,\n'',a''}$.
\end{proof}


\subsection{Finding short generating functions for short Presburger formulas}
A short Presburger formula is defined as a short Presburger sentence with the first variable $\x_{1}$ unquantified. We again group these formulas into families:

\begin{equation*}
\PFclass = \Big\{ \; F\. = \. \big[ \x_{1} \,:\, Q_2 \x_{2} \; Q_{3} \x_{3} \; \dots \; \exists \x_{k} \;\; \Phi(\x_{1}, \dots, \x_{k}) \big] \; \Big\}.
\end{equation*}
{}

\nin Here $k,\n,a$ have the same meanings as in \eqref{PclassDef}.
The $k-1$ quantifiers $Q_{2},\dots,Q_{k} \in \{\ex,\for\}$ alternate, with $Q_{k} = \ex$.
First, we prove a restricted version of Theorem~B:

\begin{theo}\label{th:main_2}
Assuming KPT, given a short formula $F \in \PFclass$ and a number $N$ in binary,
one can find a short \gf for
\[
\bigl\{\x_{1} \in \Z^{n_{1}} \cap [-N,N]^{n_{1}} \,:\, F(\x_{1}) = \textup{true}\bigr\}
\]
in time polynomial in $\phi(F)$ and $\log{N}$.
\end{theo}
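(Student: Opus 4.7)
The proof proceeds by induction on $k$, exactly paralleling the structure of Theorem~\ref{th:main_1}, with the decision step replaced by a generating-function construction. The base case uses the Barvinok--Woods theorem to produce short GFs for projections of single polytopes; the inductive step uses KPT to eliminate the innermost existential quantifier, followed by the same $\F_1/\F_2$ dichotomy that reduces the problem to a formula with one fewer alternation.

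\textbf{Normalization and base case.} First I would adapt Lemmas~\ref{red1}, \ref{red2}, and~\ref{red3}, keeping $\x_1$ as a free vector rather than as an existentially quantified singleton. After these reductions, $F(\x_1)$ becomes equivalent to a disassociated short formula
$$
Q_2 z_2 \; Q_3 z_3 \; \dots \; \exists \z_k \; G(\x_1, z_2, \dots, z_{k-1}, \z_k),
$$
where $z_2, \dots, z_{k-1}$ are bounded singletons, $\z_k$ is augmented to decompress the inner variables, the $R_j(z_j, z_{j+1})$ are bit-shift relations (with $R_1$ generalized to act on the vector $\x_1$), and the innermost linear system $\Lambda$ depends only on $z_{k-1}$ and $\z_k$. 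For the base case $k \le 2$, Lemma~\ref{red1} converts $F$ to a projection of a single bounded polytope in $\R^{n_1+O(1)}$; the Barvinok--Woods theorem then delivers a short GF in $\x_1$ of size polynomial in $\phi(F)$ and $\log N$, after intersecting with the box $[-N,N]^{n_1}$ via a Hadamard product.

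\textbf{Inductive step.} Assume the theorem for formulas with fewer than $k$ alternations. Apply KPT to $\exists \z_k \; \Lambda(z_{k-1}, \z_k)$ with parameter $z_{k-1}$: this produces a polynomial-size partition $\R = R_1 \sqcup \dots \sqcup R_r$ with constant-size test-point collections, converting $\exists \z_k \; \Lambda$ into an equivalent short $\forall \u \; \Gamma_i(z_{k-1}, \u)$ on each $R_i$, exactly as in~\eqref{candidates}. Following~\eqref{breakup}, I split the allowed range of $\x_1$ into $\F_2$, the polynomially many values of $\x_1$ whose induced $z_{k-1}$-interval $I_{\x_1}$ straddles two adjacent $R_i$'s, and $\F_1 = \bigsqcup_i \F_{1,i}$, where $I_{\x_1} \subseteq R_i$ for $\x_1 \in \F_{1,i}$. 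For $\x_1 \in \F_2$, I decide each candidate via Theorem~\ref{th:main_1} and collect the satisfying ones into a short GF (a finite sum of monomials). For each $\F_{1,i}$, as in~\eqref{reduce}, the restriction of $F$ to $\x_1 \in \F_{1,i}$ is equivalent to a short formula with only $k-1$ alternations; by the inductive hypothesis, I compute a short GF $g_i(\x_1)$ for its satisfying set. Each $\F_{1,i}$ is itself a box (or a Boolean combination of boxes) in $\x_1$-space with a trivial short GF, so intersecting $g_i$ with $\F_{1,i}$ via Hadamard product preserves shortness. Summing the contributions from $\F_2$ and all $\F_{1,i}$ yields the desired short GF.

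\textbf{Main obstacle.} The principal difficulty is maintaining polynomial bounds on every GF operation in the fixed dimension $n_1$: Hadamard products, sums, and substitutions must all be executed in polynomial time and must produce short GFs of polynomial size. For fixed dimension this is available by standard Barvinok-type machinery, but the bookkeeping must be carefully coordinated with the KPT partition so that $|\F_2|$ and the number of pieces $|\F_{1,i}|$ stay polynomial at each induction level. A minor obstacle is generalizing the bit-shift relation $R_1$ from the singleton case of Lemma~\ref{red3} to act on the vector $\x_1$, a mechanical but careful modification. Finally, controlling the growth of $(k, \n, a)$ through each inductive step so that the polynomial in $\phi(F)$ and $\log N$ remains uniform is routine but tedious.
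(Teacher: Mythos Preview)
Your proposal is correct and follows the same inductive skeleton as the paper, but differs from the paper's proof in one structural respect worth noting.

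The paper does \emph{not} keep $\x_1$ as a vector. Instead it applies Lemma~\ref{red2} in full, collapsing $\x_1$ to a singleton $z_1$, and then invokes Lemma~\ref{1dimgf} (the $\tau$-Hadamard product lemma) to pass freely between the single-variable GF in $z_1$ and the multi-variable GF in $\x_1$. After that reduction the argument is literally the proof of Theorem~\ref{th:main_1} with ``decide'' replaced by ``compute the GF'': $\F_2$ contributes a finite sum of monomials in $t$, each $\F_{1,i}$ is a genuine subinterval of $[0,2^{t_1})$, and the final GF is $f(t)=\sum_i f_i(t)+g(t)$, converted back to $\t$-variables at the very end via Lemma~\ref{1dimgf}.

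Your route avoids Lemma~\ref{1dimgf} entirely at the cost of the two modifications you flag: generalizing $R_1$ to act on the vector $\x_1$ (which amounts to replacing $z_1$ by the linear form $x_{1,1}+2^{\ell_1}x_{1,2}+\cdots$ inside the floor relation), and handling $\F_{1,i}$ as a set in $\Z^{n_1}$ rather than an interval. On the latter point, note that $\F_{1,i}$ is not a box but the preimage of an interval under the concatenation map; this is either a fixed-dimension polytope or, equivalently, a union of $O(n_1)$ boxes coming from the lexicographic structure, so Barvinok's algorithm or a direct decomposition gives its short GF and the Hadamard intersection goes through. The paper's approach is more modular (the GF conversion is isolated in a separate lemma and the induction reuses Theorem~\ref{th:main_1} verbatim); yours is more direct but spreads the multi-variable bookkeeping through the induction.
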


As we mentioned in the introduction, the special case $k=2$ of the above theorem follows from Theorem~1.7 in~\cite{BW}
on projection of integer points in a finite dimensional polytope, which we restate below for convenience.

\begin{theo}[Barvinok and Woods]\label{BW}
Fix $m$. Given a rational polytope $P \subset \R^{m}$ described by $A\x \le \b$, and a linear transformation $T : \Z^{m} \to \Z^{n}$ represented by a matrix $T \in \zz^{n \times m}$, there is a polynomial time algorithm that computes a short GF for $T(P \cap \Z^{m})$ as:
\[
g(\t) \, = \, \sum_{\z \; \in \; T(P \cap \, \Z^{m})} \t^{\z}  \; = \; \sum_{i=1}^{M} \frac{c_{i} \. \t^{\a_{i}}}{(1-\t^{\b_{i1}}) \dots (1-\t^{\b_{i s}})}\,,
\]
where $c_{i}=p_{i}/q_{i} \in \Q ,\; \a_{i},\b_{ij} \in \Z^{n} ,\; \b_{ij} \neq 0$ for all $i,j$, and $s=s(m)$ is a constant depending only on~$m$.
\end{theo}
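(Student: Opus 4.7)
The plan is to prove the theorem in three stages: (i) compute a short GF for the source $P\cap\Z^{m}$ by Barvinok's algorithm; (ii) factor $T$ as a composition of one-dimensional coordinate-dropping projections; and (iii) at each such step convert the multi-counted pushforward into a genuine indicator GF via a Hadamard product and a shift. All of these operations remain polynomial-time because the ambient dimension $m$ is fixed.

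First I invoke Barvinok's theorem to obtain in polynomial time a short GF
\[
f(\s) \. = \. \sum_{\x\in P\cap\Z^{m}}\s^{\x} \. = \. \sum_{i=1}^{N} \frac{c_{i}\.\s^{\a_{i}}}{\prod_{j=1}^{d_{m}}(1-\s^{\b_{ij}})},
\]
with $N$ polynomial in the input size and $d_{m}\le m$. After a unimodular change of coordinates on $\Z^{m}$ (which is absorbed into a cheap monomial substitution in $\s$), I may assume that $T$ is the projection onto the first $n$ coordinates, so that $T=\pi_{m-n}\circ\cdots\circ\pi_{1}$, where each $\pi_{j}$ drops a single trailing coordinate. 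Since $m$ is fixed, only a bounded number $m-n$ of one-dimensional reduction steps are needed.

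The heart of the argument is one such step. Suppose inductively we have a short GF $f_{j-1}(\s,s)$ for a set $S_{j-1}\subset\Z^{d}\times\Z$, and we want a short GF for $\pi(S_{j-1})\subset\Z^{d}$, where $\pi$ drops the last coordinate. Each fiber of $\pi|_{S_{j-1}}$ is a finite subset of $\Z$ and has a unique minimum, so the set
\[
S_{j-1}^{\prime} \. = \. \bigl\{(\x,y)\in S_{j-1} \, : \, (\x,y-1)\notin S_{j-1}\bigr\}
\]
projects \emph{bijectively} onto $\pi(S_{j-1})$. Its GF equals $f_{j-1}(\s,s)-h_{j-1}(\s,s)$, where $h_{j-1}$ is the indicator GF of the shift overlap $\{(\x,y)\in S_{j-1}:(\x,y-1)\in S_{j-1}\}$, obtainable as the Hadamard product $f_{j-1}(\s,s)\star\bigl(s\ts f_{j-1}(\s,s)\bigr)$. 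Barvinok's Hadamard-product machinery (valid in fixed ambient dimension) returns this as a short GF in polynomial time, with the ambient dimension unchanged and with $N$ and $d_{m}$ growing only by a constant factor depending on $m$. Finally, substituting $s\mapsto 1$ in the short GF for $S_{j-1}^{\prime}$ yields a short GF for $\pi(S_{j-1})$ in which every image point appears with coefficient exactly $1$. Iterating $m-n$ times, and if needed relabeling coordinates to match the original~$T$, produces the desired~$g(\t)$.

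The main obstacle is the repeated substitution $s\mapsto 1$, which is generically singular: any denominator factor $1-\s^{\b_{ij}}$ whose exponent $\b_{ij}$ is supported only in the eliminated coordinate becomes $1-1=0$. This is resolved by Barvinok's standard perturbation-plus-residue trick: replace $s\mapsto 1$ by $s\mapsto\tau^{\gamma}$ for a generic exponent~$\gamma$ chosen so that no $\b_{ij}$ is orthogonal to the substitution direction, then extract the regular value at $\tau=1$ via a Todd-operator residue computation applied summand by summand. A secondary but essential task is bookkeeping across the at most $m-n$ iterations: one must confirm that $N$ grows only polynomially and that the denominator length stays bounded by a constant depending only on~$m$, which yields the value $s=s(m)$ claimed in the statement.
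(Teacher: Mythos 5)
The paper does not prove Theorem~\ref{BW}; it is quoted from Barvinok--Woods \cite{BW} (their Theorem~1.7) and used purely as a black box, namely as the base case $k=2$ of Theorem~\ref{th:main_2}. So your proposal has to stand on its own, and it has a genuine gap at its central inductive step. The first elimination is fine: for $S_0=P\cap\Z^m$ the fiber over a fixed $\x$ is the set of integers in an interval (convexity of $P$), hence a single contiguous run, so $\{(\x,y)\in S_0:(\x,y-1)\notin S_0\}$ meets each nonempty fiber in exactly one point. But from the second step on, $S_{j-1}$ is only a lattice set with a short GF, not the lattice points of a polytope, and its fibers under the next coordinate projection need not be contiguous: the fiber of $S_1=\pi_1(P\cap\Z^3)$ over $x_1=c$ is the projection onto the $x_2$-axis of the integer points of the slice $P\cap\{x_1=c\}$, and such one-dimensional projections are exactly the gapped sets (e.g.\ $\{0,2,4,\dots\}$, from the segment joining $(c,0,0)$ and $(c,2K,K)$) that the theorem is trying to control. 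For a fiber with $r$ maximal runs your set $S'_{j-1}$ contains $r$ points over the same image point, so after $s\mapsto 1$ the coefficient of $\t^{\x}$ is the number of runs rather than $1$; the output is not the indicator GF of $\pi(S_{j-1})$, and the error compounds across iterations. This is precisely the obstruction the present paper flags in the introduction (``having switched from polytopes to short GFs, the Barvinok--Woods technique cannot be iterated''), and \cite{NP2} shows that projecting a set given only by its short GF is in fact computationally hard.

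Consequently the real Barvinok--Woods argument cannot be, and is not, a chain of $m-n$ independent one-coordinate eliminations of this type; it treats the full projection $T$ at once, using the flatness theorem and a structural analysis of the fibers of $T|_P$ to express $T(P\cap\Z^m)$ through a controlled family of auxiliary polytopes before the Hadamard-product calculus is applied --- that is the genuinely hard content of the theorem. Your peripheral points are sound (Hadamard products in fixed dimension, denominator length growing only by a bounded factor per step, regularizing $s\mapsto1$ by a generic monomial substitution and residue extraction), but they do not repair the bijectivity failure above. A smaller issue: reducing a general $T\in\Z^{n\times m}$ to a coordinate projection needs the image lattice $T(\Z^m)$, which may be a proper sublattice of $\Z^n$; a unimodular change of coordinates on the source alone does not arrange this.
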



Define the \emph{length} of the short GF~$\ts g(\t)\ts$ as in Theorem~\ref{BW} as
\begin{equation}\label{GFsize}
\phi(g) \, = \,
\sum_{i} \. \lceil\log_2 |p_{i} \. q_{i}|+1\rceil \,  + \,
\sum_{i, j} \. \lceil\log_2 a_{i\ts j}+1\rceil +
 +  \sum_{i,j,r} \. \lceil\log_2 b_{i \ts j \ts r}+1\rceil\ts,
\end{equation}
where $\a_i=(a_{i\ts 1},\ldots,a_{i\ts n})$ and $\b_{i\ts j}=(b_{i\ts j\ts 1},\ldots,b_{i\ts j\ts n})$.

Referring back to the proof of Theorem~\ref{th:main_1}, we see that Theorem~\ref{th:main_2} can be proved following the same vein if we assume $n_{1} = 1$, i.e., $\x_{1}$ is a singleton $x_{1}$. If $n_{1} > 1$, we can first convert $\x_{1}$ into a singleton by concatenating its (bounded) coordinates into a single number $x_{1}$ as in Lemma~\ref{red2}. The cases corresponding to positive and negative coordinates $x_{1,j}$ can be treated separately. However, doing so would affect the multi-variable generating function for $\x_{1}$. The following technical result is a GF analogue of Lemma~\ref{red2}, which allows one to convert between multi-variable and single-variable short generating functions.

\begin{lem}\label{1dimgf}
Fix $n$. Assume $F \subseteq [0,2^{\ell})^{n}$ has a short \gf $f(\t)$ which expands into $\sum_{\x \in F} \t^{\x}$.
Let $G\subseteq \bigl[0,2^{n\ell}\bigr)$ be defined as
\[
G \ts := \ts \bigl\{x_{1} + 2^{\ell}x_{2} + \dots + 2^{(n-1)\ell} x_{n} : (x_{1}, \dots, x_{n}) \in F \bigr\} \..
\]
Then $G$ has a short \gf $g(t)$ of length $\ts \polyin{\phi(f) + \ell}$ which expands into $\sum_{x \in G} t^{x}$.
Conversely, if $G$ has a short \gf $g(t)$, then $F$ also has a short \gf $f(\t)$ of length
$\ts\polyin{\phi(g) + \ell}$.
\end{lem}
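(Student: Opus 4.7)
The key observation is that $\pi \colon (x_1,\ldots,x_n) \mapsto \sum_{i=1}^n 2^{(i-1)\ell} x_i$ is a linear bijection from $[0,2^\ell)^n$ onto $[0,2^{n\ell})$ that restricts to a bijection $F \to G$. Both directions then amount to transporting a short GF across~$\pi$.

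For the forward direction, I would perform the monomial substitution $\ts t_i \mapsto t^{2^{(i-1)\ell}}\ts$ in $f(\t)$. Every monomial $\t^\x = \prod_i t_i^{x_i}$ becomes $t^{\pi(\x)}$, so the substituted series equals $\sum_{\x \in F} t^{\pi(\x)} = g(t)$. At the level of the form $(\divideontimes)$, each numerator exponent $\a \in \Z^n$ collapses to the single integer $\langle \a,\,(1, 2^\ell, \ldots, 2^{(n-1)\ell}) \rangle$ of binary length $O(\phi(f)+\ell)$, and likewise for the denominator exponents. Provided no substituted denominator exponent becomes zero, this already gives $g(t)$ of length $\polyin{\phi(f)+\ell}$.

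For the backward direction, I would encode $\pi$ itself as a ``digit-packing'' short GF in the $n+1$ variables $(\t, t)$:
\[
U(\t, t) \,=\, \prod_{i=1}^n \frac{1 - (t_i \ts t^{2^{(i-1)\ell}})^{2^\ell}}{1 - t_i \ts t^{2^{(i-1)\ell}}} \,=\, \sum_{\x \in [0,2^\ell)^n} \t^\x \ts t^{\pi(\x)},
\]
which has length $O(\ell)$ since $n$ is fixed. Pairing it with the ``slab'' GF
\[
V(\t, t) \,=\, g(t) \cdot \prod_{i=1}^n \frac{1-t_i^{2^\ell}}{1-t_i} \,=\, \sum_{x \in G,\,\x \in [0,2^\ell)^n} t^x \ts \t^\x
\]
of length $\polyin{\phi(g)+\ell}$, the Hadamard product $U \star V$ taken in the fixed-dimensional lattice $\Z^{n+1}$ is computable in polynomial time by the Barvinok--Woods machinery and equals $\sum_{\x \in F} \t^\x \ts t^{\pi(\x)}$, because the common support of $U$ and $V$ is precisely the graph of $\pi$ restricted to $F$. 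Specializing $t \mapsto 1$ in the resulting rational expression then produces $f(\t)$.

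The main obstacle in both directions is that a naive substitution (either $t_i \mapsto t^{2^{(i-1)\ell}}$ or $t \mapsto 1$) may send an individual denominator factor to $1-1=0$ even though the underlying sum is a genuine polynomial. I would handle this uniformly via Barvinok's generic-perturbation trick: replace the offending substitution by its $(1+\varepsilon)$-deformation in a formal variable~$\varepsilon$, expand into a Laurent series in $\varepsilon$, and extract the $\varepsilon^0$ coefficient. Since both $f$ and $g$ are polynomials with bounded supports, this extraction collapses to a short GF of the claimed polynomial length.
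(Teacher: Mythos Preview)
Your proof is correct. The forward direction is identical to the paper's: both perform the monomial substitution $t_i \mapsto t^{2^{(i-1)\ell}}$ and invoke the Barvinok--Woods substitution lemma.

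The backward direction differs in packaging. The paper introduces a bespoke ``$\tau$-Hadamard product'' $A(\t)\star_\tau B(t) \coloneqq \sum_\x \alpha_\x \beta_{\tau(\x)}\t^\x$ and shows $f(\t)=a(\t)\star_\tau g(t)$, where $a(\t)$ is the box GF $\sum_{\x\in[0,N)^n}\t^\x$; it then proves this product is short by essentially re-running the Barvinok Hadamard construction on a polyhedron in $\R^{p+q}$ cut out by the single equation $\tau(\a+\sum\zeta_i\b_i)=c+\sum\xi_jd_j$. You instead lift to one extra variable, encoding the graph of $\pi$ in $U(\t,t)$, take the \emph{ordinary} Hadamard product with $V(\t,t)=g(t)\cdot a(\t)$ in $\Z^{n+1}$, and then specialize $t\to 1$. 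Your route is more modular---it uses the standard Hadamard product and monomial specialization as black boxes rather than opening up the Hadamard proof---at the mild cost of an extra variable and a final substitution step. The paper's route avoids that last substitution but has to redo the polyhedral argument. Both yield the same polynomial bounds, and your handling of degenerate denominator factors via the $\varepsilon$-perturbation is exactly the standard device (and is what the cited substitution theorems do internally).
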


\begin{proof}[Proof of Lemma~\ref{1dimgf}]
Let $N = 2^{\ell}$. Assume the formula $F$ has a short \gf $f(\t)$ that satisfies
\[
f(\t) = \sum_{\x \in F} \t^{\x} = \sum_{\x \in F} t_{1}^{x_{1}} \dots \ts t_{n}^{x_{n}}.
\]
Let $g(t)$ be the evaluation of $f(\t)$ under the following substitutions:
\[
t_{1} \gets t, \; t_{2} \gets t^{N}, \ldots, \; t_{n} \gets t^{N^{n-1}},
\]
so that
\[
\t^{\x} \. = \. t^{x_{1} + N x_{2} + \ldots + N^{n-1} x_{n-1}}.
\]
Clearly, GF $g(t)$ expands into $\sum_{x \in G} t^{x}$.  Thus it is a
short generating function for $G$. By Theorem~2.6 in \cite{BW},
the above monomial substitutions on $f(\t)$ can be performed in
polynomial time, giving $g(t)$ of polynomial length.
\smallskip

For the other direction, assume $G$ has a short \gf $g(t)$.
%
Consider the following multi-variable short \gf $a(\t)$:
\[
a(\t) \. = \, \sum_{\x \in [0,N)^{n} } \t^{\x} \, = \, \frac{1-t_{1}^{N}}{1-t_{1}} \. \cdots \.\frac{1-t_{n}^{N}}{1-t_{n}}\..
\]
Since $n$ is fixed, after expanding product in the numerators, we have $a(\t)$ a short \gf of length $\polyin{\log N}$.

Define a linear map $\tau : \Z^{n} \to \Z$ as:
\[
\tau(\x) \. = \. x_{1} + N x_{2} + \ldots + N^{n-1} x_{n}.
\]
Given $A(\t) = \sum \al_{\x} \t^{\x}$ a multi-variable short GF and $B(t) = \sum \be_{x} t^{x}$ a single-variable short GF, we define their \emph{$\tau$-Hadamard product} $C(\t) = A(\t) \tauHad B(t)$ as follows:
\begin{equation}\label{tauHadDef}
A(\t)  \tauHad B(t) \coloneqq \sum \al_{\x} \be_{\tau(\x)} \t^{\x}\..
\end{equation}
From this definition, it is clear that our original set $F \in [0,N)^{n}$ has a GF given by:
\[
f(\t) = a(\t) \tauHad g(t).
\]
We prove the following claim:
The $\tau$-Hadamard product of two short GFs is again a short GF of polynomial length.
The proof is an analogue of Barvinok's argument in \cite{B2} (see also lemmas 3.4 and~3.6 in \cite{BW}). First, notice that the $\tau$-Hadamard product is bilinear in $A(\t)$ and $B(t)$. Therefore, it suffices to prove the claim when $A(\t)$ and $B(\t)$ each has only one term, i.e.,
\begin{equation}\label{singleterm}
A(\t) \, = \, \frac{\t^{\a}}{\prod_{i=1}^{p} (1 - \t^{\b_{i}})} \ \quad \text{and} \quad B(t) \, = \, \frac{t^{c}}{\prod_{j=1}^{q} (1-t^{d_{j}})}\..
\end{equation}

Consider an (unbounded) polyhedron $\ts P \subset \R^{p + q}\ts$ with coordinates $\ts (\zeta_{1}, \dots, \zeta_{p}, \xi_{1}, \dots, \xi_{q})$, defined as:
\begin{equation}\label{HadamardPolytope}
P \coloneqq
\begin{Bmatrix}
\zeta_{1},\dots,\zeta_{p},\xi_{1},\dots,\xi_{q} & \ge &0 \\
\tau(\a + \zeta_{1} \b_{1} + \dots + \zeta_{p} \b_{p}) & = &c + \xi_{1} d_{1} + \dots + \xi_{q} d_{q}
\end{Bmatrix}.
\end{equation}
By Theorem~2.2 from \cite{B1}, we can write a short GF for $P \cap \Z^{p+q}$:
\begin{equation}\label{diagonal}
D(\u,\v) \coloneqq \sum_{(\zzeta,\xxi) \in P} \u^{\zzeta} \v^{\xxi} = \sum_{(\zzeta,\xxi) \in P} u_{1}^{\zeta_{1}} \dots u_{p}^{\zeta_{p}} \, v_{1}^{\xi_{1}} \dots v_{q}^{\xi_{q}}.
\end{equation}
By \eqref{singleterm}, the expansions of $A(\t)$ and $B(t)$ are:
\begin{equation}\label{expansion}
A(\t) = \sum_{\zzeta \ge 0} \t^{\a + \zeta_{1} \b_{1} + \dots + \zeta_{p} \b_{p}}, \  B(t) = \sum_{\xxi \ge 0} t^{c + \xi_{1} d_{1} + \dots + \xi_{q} d_{q}}.
\end{equation}
We substitute
\[
u_{1} \gets \t^{\b_{1}} , \dots , u_{p} \gets \t^{\b_{p}} \quad \text{and} \quad v_{1} \gets 1 , \dots , v_{q} \gets 1.
\]
By \eqref{HadamardPolytope}, \eqref{diagonal} and ~\eqref{expansion}, we get

\[
\t^{\a} D(\t^{b_{1}}, \dots , \t^{b_{p}}, 1, \dots , 1) = A(\t) \tauHad B(t) = C(\t).
\]
Since substitutions can be done in polynomial time, we obtain a short GF $C(\t)$ of polynomial length.
This completes the proof.
\end{proof}

\smallskip

\begin{proof}[Proof of Theorem~\ref{th:main_2}]
First, we make a change of variables from $\x_{1}$ to $\x'_{1} = \x_{1} + N$, i.e., $x'_{1,j} = x_{1,j} + N$.
So counting the number of $\x_{1} \in [-N,N]^{n_{1}}$ is equivalent to counting the number of $\x'_{1} \in [0,2N]^{n_{1}}$.
Therefore, we can assume that all coordinates of $\x_{1}$ are non-negative.

Given a formula in $\PFclass$, we can apply Lemmas~\ref{red1}, \ref{red2} and \ref{red3} to convert it into an equivalent formula $F$ in disassociated form as in \eqref{disassociated} (with $\exb z_{1}$ replaced by ``$z_{1} : $''). The vector $\x_{1}$ is now a singleton $z_{1}$ bounded in some interval $[0, 2^{t_{1}})$.
Applying Lemma~\ref{1dimgf}, it is equivalent to show that the GF $\. f(t) = \sum_{z_{1}} t^{z_{1}} \.$ is short. We prove the result by induction on $k$. The case $k=2$ follows from Theorem~\ref{BW}.

Assume that for fixed $k$ and all $\n'$ and $a'$, every formula in $\PF_{k-1,\n',a'}$ has a short GF of polynomial length in every finite interval $[0,N)$. Applying the same reasoning as in the proof of Theorem~\ref{th:main_1}, we get a partition for $[0,2^{t_{1}})$ into $\F_{1}$ and $\F_{2}$, see~\eqref{breakup}. Recall that $|\F_{2}|$ is polynomial in $\phi(F)$. Substituting each value $z \in \F_2$ into $F$ for $z_{1}$, we get a fully quantified short Presburger statement $S_{z}$ in some class $\P^{\bounded}_{k-1,\n',a'}$~, with $\phi(S_{z}) = \polyin{\phi(F)}$. Each such statement $S_{z}$ can be checked in time $\polyin{\phi(S_{z})}$ by Theorem~\ref{th:main_1}. Therefore, in time $\polyin{\phi(F)}$, we obtain a short GF $g(t)$:
\[
g(t) = \sum_{z \in \F_{2} \; : \; S_{z} = \text{true}} t^{z}.
\]

By \eqref{subinterval}, we have a refinement of $\F_{1}$ into polynomially many intervals $\F_{1,i}$, where $1 \le i \le r$. By \eqref{reduce}, for $z_{1} \in \F_{1,i}$, the formula $F$ is equivalent to another formula $F_i$ in some class $\lnot\PF_{k-1,\n'',a''}$, with $\phi(F_{i}) = \polyin{\phi(F)}$. The GF $f_{i}(t)$ for $F_i$ can be found in time $\polyin{\phi(F_{i})}$ by induction.

In summary, we obtain in time $\polyin{\phi(F)}$, the GF
\[
f(t) \. = \. \sum_{i=1}^{r} \. f_{i}(t) + g(t),
\]
which completes the proof.
\end{proof}

We can actually remove the coordinate bounds in Theorem~\ref{th:main_2}:

\begin{theo}\label{th:main_2_strong}
Assuming KPT, given a short formula $F \in \PFclass$, we can find a short \gf for
\[
\bigl\{\x_{1} \in \Z^{n_{1}} \,:\, F(\x_{1}) = \textup{true}\bigr\}
\]
in time polynomial in $\phi(F)$.
\end{theo}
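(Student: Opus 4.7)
The plan is to reduce Theorem~\ref{th:main_2_strong} to Theorem~\ref{th:main_2} by isolating a large enough bounded ``core'' on which the earlier theorem applies directly, and by describing the complementary ``tails'' using their eventual periodicity together with explicit geometric series.

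First I would dissect $\Z^{n_1}$ as the disjoint union of $3^{n_1}$ regions $W_{\sigma}$ indexed by sign patterns $\sigma\in\{-1,0,+1\}^{n_1}$: coordinate $j$ of $\x_1$ lies in $[-N,N]$, $(N,\infty)$, or $(-\infty,-N)$ according as $\sigma_j=0,+1,-1$, with $N=2^{M}$ for some $M=\polyin{\phi(F)}$ to be chosen. On the central box $W_{\mathbf 0}$, Theorem~\ref{th:main_2} directly produces a short GF in time $\polyin{\phi(F)+\log N}=\polyin{\phi(F)}$. On each nonzero $W_{\sigma}$, the affine substitution $(\x_1)_j\gets\sigma_j(N+1+(\y_1)_j)$ at the coordinates with $\sigma_j\neq 0$ shifts $W_{\sigma}$ to a product region $[-N,N]^{d}\times\N^{n_1-d}$ in new coordinates $(\u,\y)$, transforming $F$ into a new short Presburger formula $F'(\u,\y)$ of polynomial length; the corresponding GF is recovered by the matching change of variables $t_j\gets t_j^{\sigma_j}$ and translation of monomial exponents.

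The technical heart of the proof is an eventual periodicity lemma: for $N$ taken sufficiently large, there exist a full-rank sublattice $\Lambda\subseteq\Z^{n_1-d}$ of index $2^{\polyin{\phi(F)}}$ and a threshold $N'=2^{\polyin{\phi(F)}}$ such that for every $(\u,\y)$ with $\u\in[-N,N]^{d}$ and every $y_j>N'$, the truth of $F'(\u,\y)$ depends only on $\u$ and on the coset $\y\bmod\Lambda$. I would prove this by induction on $k$, at each step applying Theorem~\ref{KannanPartition} with $\y$ playing the role of the parameter: each application stratifies parameter space into polynomially many copolyhedral cells whose unbounded parts are rational cones modulo bounded translations, and the test-point maps $T'_{ij}$ are affine with denominators bounded by $2^{\polyin{\phi(F)}}$. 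In the deep tail of each unbounded cell, the floor-function conditions encoding test points stabilize into purely periodic conditions on $\y$ modulo a lattice determined by these denominators, and the $k-1$ successive stratifications combine to give the global pair $(\Lambda,N')$.

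Given the lemma, for each nonzero $\sigma$ I split the shifted region into a near-boundary piece $[-N,N]^{d}\times[0,N']^{n_1-d}$, on which Theorem~\ref{th:main_2} directly furnishes a short GF, and a deep tail $[-N,N]^{d}\times(N',\infty)^{n_1-d}$. On the deep tail, each slice of fixed $\u$ and fixed coset representative $\rho\in\Z^{n_1-d}/\Lambda$ is either entirely inside or entirely outside the solution set, as decided by one call to Theorem~\ref{th:main_1}; each positive slice contributes an explicit product of geometric series in $\t_{\y}$, indexed by the finitely many pairs $(\u,\rho)$, assembled via a Smith-normal-form basis of~$\Lambda$. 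Summing the resulting short GFs across the $3^{n_1}$ regions $W_\sigma$ and undoing the sign substitutions yields a short GF for $F$ in time $\polyin{\phi(F)}$. The chief obstacle is the eventual periodicity lemma itself: making the index of $\Lambda$ and the threshold $N'$ simultaneously $2^{\polyin{\phi(F)}}$ requires tracking the accumulation of denominators and thresholds across the $k-1$ inductive applications of KPT, mirroring the inductive structure of Theorem~\ref{th:main_1}'s proof; once this quantitative bound is in hand the rest of the argument is essentially bookkeeping.
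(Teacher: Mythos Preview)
Your strategy---reduce the unbounded problem to the bounded Theorem~\ref{th:main_2} on a large central box, and handle the tails via eventual periodicity---is natural, and the periodicity lemma you state is plausible (it is essentially quantitative semi-linearity of Presburger sets). However, the assembly step on the deep tail contains a genuine complexity gap. You propose to iterate over pairs $(\u,\rho)$ with $\u\in[-N,N]^d\cap\Z^d$ and $\rho\in\Z^{n_1-d}/\Lambda$, deciding each by a call to Theorem~\ref{th:main_1} and contributing one geometric-series term per positive pair. But you have already fixed $N=2^{M}$ with $M=\polyin{\phi(F)}$ and $[\Z^{n_1-d}:\Lambda]=2^{\polyin{\phi(F)}}$, so the number of such pairs is $(2N+1)^d\cdot[\Z^{n_1-d}:\Lambda]$, which is exponential in~$\phi(F)$. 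Thus neither the number of calls to Theorem~\ref{th:main_1} nor the length of the resulting sum of geometric series is polynomial; the Smith normal form of~$\Lambda$ only gives coordinates on the quotient without shrinking it. To repair this you would have to show that the set of ``good'' pairs $(\u,\rho)$ is itself definable by a short Presburger formula in suitably encoded integer variables, compute \emph{its} short GF by a further call to Theorem~\ref{th:main_2}, and then combine that GF with the lattice series for~$\Lambda$ via a Hadamard-type product---substantially more than you sketch, and essentially the content of the external result the paper invokes.

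The paper's own proof is a two-line reduction that bypasses all of this: it cites Theorem~5.3 of~\cite{NP}, which (i)~computes from~$F$ in polynomial time a threshold~$N$, and (ii)~given the short GF $f_N(\t)$ on the box $[-N,N]^{n_1}$, recovers the full short GF $f(\t)$ in polynomial time, working entirely at the level of rational generating functions with no enumeration of tail cosets. One then applies Theorem~\ref{th:main_2} to produce~$f_N$ and feeds it to this black box. Your approach is effectively an attempt to reprove that black-box extension inline; the missing piece is exactly the polynomial-time aggregation over the exponentially many residue classes.
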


\begin{proof}
By Theorem 5.3 in~\cite{NP}, given a Presburger formula $F$, the full generating
function $f(\t)$ for all satisfying $\x_{1}$ can be computed in polynomial time
given a partial generating function $f_{N}(\t)$ for satisfying $\x_{1}$ in a
large enough box $[-N,N]^{n_{1}}$.
This result also allows us to compute $N$ in polynomial time given $F$.
With such an $N$, we can appeal to Theorem~\ref{th:main_2} to compute $f_{N}(\t)$ so that $\phi(f_{N})$ is polynomial in $\log N$ and $\phi(F)$.
Since $\log N = \polyin{\phi(F)}$, we also have $\phi(f_{N}) = \polyin{\phi(F)}$.
By an application of Theorem 5.3 in~\cite{NP}, we recover the full generating function $f$,
which satisfies $\phi(f) = \polyin{\phi(f_{N})} = \polyin{\phi(F)}$.\footnote{Generally
speaking, one needs to be careful taking evaluations and Hadamard products
for bi-infinite Laurent power series, to avoid summations of the type $\sum_{n\in \zz} t^n$.
Paper~\cite{NP} sidesteps this problem by explicitly disallowing such summations.
We refer to~\cite{B3,BP} for the theory of valuations in this context, which allows
one to get around this issue.
}
\end{proof}

\begin{rem}
Here we treat the full generating function of $\x_{1}$ as formal power series which can also be represented as a rational function $f(\t)$.
In some cases, the power series might not converge under numerical substitution.
For example, if $F$ is a trivial formula then every $\x_{1} \in \Z^{n_{1}}$ satisfies $F$.
So the power series for $\x_{1}$ is $\sum_{\x_{1} \in \zz^{n_{1}}} \t^{\x_{1}}$, which is not convergent for any non-zero $\t$.
However, if $\x_{1}$ is restricted to lie in a pointed cone, for example $\x_{1} \in \nn^{n_{1}}$, then the power series converges on a non-empty open domain.
For any $\t$ in that domain, the power series converges to the computed rational function $f(\t)$.

\end{rem}

\bigskip

\section{Final remarks}

\subsection{Long systems}\label{LenstraKannanShort}
Recall that both Lenstra and Kannan's results on deciding sentences of
types $(\circ)$ and $(\circ\circ)$ as in the introduction allow for \emph{long systems of inequalities}.
However, we can reduce each case to deciding a polynomial numbers of short sentences.
Indeed, let $n$ be fixed and $m\ge 2^n$ be arbitrary.  The \emph{Doignon--Bell--Scarf theorem}
\cite[$\S$16.5]{Schrijver} (see also~\cite{ABDL}) implies that a system $A\x \le \b$ with $A \in \zz^{m \times n}$ has an integer solution $\x \in \Z^{n}$ if and only if every
short subsystem $A' \x \le \cj{b'}$ has a solution $\x \in \Z^{n}$.
Here $A'$ is a submatrix with $2^{n}$ rows from $A$,
and $\cj{b'}$ is the corresponding subvector from~$\b$.

For one quantifier $\ex$, by the Doignon--Bell--Scarf theorem, we have:
\[
\ex\ts \x \,:\, A\x \le \b \quad \iff \quad \bigwedge_{(A',\cj{b'})} \ex\ts \x \,:\, A'\x \le \cj{b'} \ts.
\]
So it is equivalent to decide each of the ${m}\choose{2^{n}}$ short sentences individually.
This number clearly polynomial in $m$ if~$n$ is fixed.

For two quantifiers $\for\ts\ex$, in the system $A'\x + B' \y \le \cj{c'}$ we can proceed
in a similar manner, see~\cite[$\S$7.1]{NP1}.
However, already for three quantifiers as in $(\circ\mts\mts{}\circ\mts\mts{}\circ)$
this approach provably fails.
Roughly, this is because the long conjunction over $(A',B',\cj{c'})$
no longer commutes with the outer existential quantifier $\ex\ts \z \in R$.

In fact, our most recent result~\cite{NP1} proves that for long systems as
in~$(\circ\mts\mts{}\circ\mts\mts{}\circ)$, the problems becomes $\NP$-complete,
already for $\ov n = (1,2,3)$.  This negatively resolves an open problem in~\cite{K2}
and underscores the contrast with Theorem~A.

\medskip

\subsection{Bounded affine dimension}\label{ss:finrem-Eisenbrand-Shmonin}
In \cite{ES}, Eisenbrand and Shmonin strengthened Kannan's Partition Theorem (Theorem~\ref{KannanPartition})
by completely removing the condition that the parameter space $W \subset \rr^{m}$ has a bounded affine dimension~$q$.
However, in their final result (\cite[Th.~4.1]{ES}), the parameter space~$W$ is partitioned
into $Q_{1} \sqcup \dots \sqcup Q_{r}$,
where each $Q_{i} \subset \R^{m}$ is no longer a copolyhedron.  Instead, each $Q_{i}$
is now the \emph{integer projection} of some higher dimensional
rational copolyhedron $Q'_{i} \subset \R^{m+k}$, defined as:
 \[
 Q_{i} \coloneqq \{\b \in \R^{m} \,:\, \ex\ts \y \in \Z^{k} \;\; (\b,\y) \in Q'_{i}\}.
 \]
Here $k$ is a constant that depends only on $n$.

 Note that having each piece $P_{i}$ as an
 actual copolyhedron (interval for $m=1$), is crucial for our proof of
 Theorem~\ref{th:main_1}.  For this, see the partition into intervals $R_i$
 in~\eqref{Rpartition}, and a discussion that follows.

\medskip

\subsection{Validity of Kannan's Partition Theorem}\label{ss:finrem-Kannan}

The proof of KPT given in~\cite{K1} is quite technical and relies on
an earlier conference paper which was later revised and published separately~\cite{K2},
which in turn uses the \emph{flatness theorem} (as did~\cite{BW,ES}), and other earlier work.
While we have no doubt in the validity of Kannan's Theorem~\ref{t:Kannan},
in part due to its self-contained presentation and generalization in~\cite{ES}
(see also~\cite{E2}), we were unable to piece together all the details which
go into the proof of KPT.
However, at this time we are not ready to establish a clear gap in the proof
of KPT, which would revert its status to a conjecture.  We are simply being
cautious in citing a theorem whose proof we do not fully understand, and
which is crucially used as a black box in the proof of both theorems~A and~B.

In the near future, we intend to bring more clarity into validity of~KPT,
at least in the $m=1$ case which is used in the paper.  In the meantime
we intend to treat KPT as an oracle, a time honored tradition in both
computational logic and computational complexity.  We hope this clarifies
the reasoning behind our somewhat nonstandard use of KPT as an assumption
in the statements of the results.

\vskip.6cm


\subsection*{Acknowledgements}
We are greatly indebted to Sasha Barvinok for his eternal optimism, many fruitful discussions and encouragement.
We are also thankful to Iskander Aliev, Matthias Aschenbrenner, Art\"{e}m Chernikov,
Jes\'{u}s De Loera, Lenny Fukshansky, Oleg Karpenkov and Sinai Robins for interesting
conversations and helpful remarks.  We are very grateful to Rafi Ostrovsky and Vijay Vazirani
for kindly advising us on the structure of the paper, and to anonymous referees for
their comments.  The second author was partially supported by the~NSF.



\vskip1.1cm

{\footnotesize

}
\begin{thebibliography}{21131331}\label{refpage}



\bibitem[ABDL]{ABDL}
I.~Aliev, R.~Bassett, J.~A.~De Loera, and Q.~Louveaux,
A Quantitative Doignon--Bell--Scarf Theorem, to appear in
\emph{Combinatorica}; \ts {\tt  arXiv:1405.2480}.

\bibitem[B+12]{B+}
V.~Baldoni, N.~Berline, J.~A.~De~Loera, M.~K\"{o}ppe and M.~Vergne,
Computation of the highest coefficients of weighted Ehrhart
quasi-polynomials of rational polyhedra,
\emph{Found.\ Comput.\ Math.}~\textbf{12} (2012), 435--469.

\bibitem[Bar93]{B1}
A.~Barvinok,
A polynomial time algorithm for counting integral points in
polyhedra when the fimension is fixed, in \emph{Proc.\ 34th FOCS},
IEEE, Los Alamitos, CA, 1993, 566--572.

\bibitem[Bar06]{B2}
A.~Barvinok,
The complexity of generating functions for integer points in polyhedra and beyond,
in \emph{Proc.~ICM}, Vol.~3, EMS, Z\"urich, 2006, 763--787.

\bibitem[Bar08]{B3}
A.~Barvinok,
\emph{Integer points in polyhedra}, EMS, Z\"urich, 2008.

\bibitem[BP99]{BP}
A.~Barvinok and J.~E.~Pommersheim,
An algorithmic theory of lattice points in polyhedra, in
\emph{New Perspectives in Algebraic Combinatorics},
Cambridge Univ. Press, Cambridge, 1999, 91--147.


\bibitem[BW03]{BW}
A.~Barvinok and K.~Woods,
Short rational generating functions for lattice point problems,
\emph{Jour.\ AMS}~\textbf{16} (2003), 957--979.

\bibitem[BV07]{BV}
N.~Berline and M.~Vergne,
Local Euler--Maclaurin formula for polytopes,
\emph{Mosc.\ Math.\ J.}~\textbf{7} (2007), 355--386.

\bibitem[CH16]{CH}
D.~Chistikov and C.~Haase,
The taming of the semi-linear set,
in \emph{Proc.\ ICALP\ 2016}, 127:1--127:13.

\bibitem[CDW12]{CDW}
M.~Christandl, B.~Doran and M.~Walter,
Computing multiplicities of Lie group representations, in
\emph{Proc.\ 53rd FOCS}, IEEE, Los Alamitos, CA, 2012, 639--648

\bibitem[Coo72]{C}
D.~C.~Cooper,
Theorem proving in arithmetic without multiplication,
in \emph{Machine Intelligence} (B.~Meltzer and D.~Michie, eds.),
Edinburgh Univ.\ Press, 1972, 91--99.


\bibitem[D+04]{D+}
J.~A.~De~Loera, D.~Haws, R.~Hemmecke, P.~Huggins, B.~Sturmfels and R.~Yoshida,
Short rational functions for toric algebra and applications,
\emph{J.\ Symbolic Comput.}~\textbf{38} (2004), 959--973.

\bibitem[D+06a]{DHKW1}
J.~A.~De~Loera, R.~Hemmecke, M.~K\"{o}ppe and R.~Weismantel,
Integer polynomial optimization in fixed dimension,
\emph{Math.\ Oper.\ Res.}~\textbf{31} (2006), 147--153.

\bibitem[D+06b]{DHKW2}
J.~A.~De~Loera, R.~Hemmecke, M.~K\"{o}ppe and R.~Weismantel,
FPTAS for mixed-integer polynomial optimization with a fixed number of
variables, in \emph{Proc.~17th SODA}, ACM Press, 2006, 743--748.

\bibitem[DHTY04]{DHTY}
J.~A.~De~Loera, R.~Hemmecke, J.~Tauzer and R.~Yoshida,
Effective lattice point counting in rational convex polytopes,
\emph{J.\ Symbolic Comput.}~\textbf{38} (2004), 1273--1302.




\bibitem[DK97]{DK}
M.~Dyer and R.~Kannan,
On Barvinok's algorithm for counting lattice points in fixed dimension,
\emph{Math.\ Oper.\ Res.}~\textbf{22} (1997), 545--549.



\bibitem[Eis03]{E1}
F.~Eisenbrand,
Fast integer programming in fixed dimension, in \emph{Proc.\ 11th ESA},
Springer, Berlin, 2003, 196--207.


\bibitem[Eis10]{E2}
F.~Eisenbrand,
Integer programming and algorithmic geometry of numbers, in
\emph{50 years of Integer Programming}, Springer, Berlin, 2010, 505--560.

\bibitem[EH12]{EH}
F.~Eisenbrand and N.~H\"{a}hnle,
Minimizing the number of lattice points in a translated polygon, in
\emph{Proc.\ 24th SODA}, SIAM, Philadelphia, PA, 2012, 1123--1130.

\bibitem[ES08]{ES}
F.~Eisenbrand and G.~Shmonin,
Parametric integer programming in fixed dimension,
\emph{Math.\ Oper.\ Res.}~\textbf{33} (2008), 839--850.




\bibitem[FR74]{FR}
M.~J.~Fischer and M.~O.~Rabin,
Super-Exponential Complexity of Presburger Arithmetic, in
\emph{Proc.\ SIAM-AMS Symposium in Applied Mathematics}, AMS,
Providence, RI, 1974, 27--41.

\bibitem[FT87]{FT}
A.~Frank  and \'{E}.~Tardos,
An application of simultaneous Diophantine approximation in combinatorial optimization,
\emph{Combinatorica}~\textbf{7} (1987), 49--65.

\bibitem[F\"{u}r82]{Fur}
M.~F\"{u}rer,
The complexity of Presburger arithmetic with bounded quantifier alternation depth,
\emph{Theoret.\ Comput.\ Sci.}~\textbf{18} (1982), 105--111.


\bibitem[Gr\"{a}87]{G}
E.~Gr\"{a}del,
\emph{The complexity of subclasses of logical theories}, Dissertation,
Universit\"{a}t Basel, 1987.



\bibitem[Kan90]{K1}
R.~Kannan,
Test sets for integer programs, $\forall\ts\exists$ sentences,
in \emph{Polyhedral Combinatorics}, AMS, Providence, RI, 1990,
39--47

\bibitem[Kan92]{K2}
R.~Kannan,
Lattice translates of a polytope and the Frobenius problem,
\emph{Combinatorica}~\textbf{12} (1992), 161--177.

\bibitem[K\"{o}p07]{Kop}
M.~K\"{o}ppe,
A primal Barvinok algorithm based on irrational decompositions,
\emph{SIAM J.\ Discrete Math.}~\textbf{21} (2007), 220--236.

\bibitem[KV08]{KV}
M.~K\"{o}ppe and S.~Verdoolaege,
Computing parametric rational generating functions with a primal Barvinok algorithm,
\emph{Electron.\ J.\ Combin.}~\textbf{15} (2008), no.~1, RP~16, 19~pp.






\bibitem[Len83]{L}
H.~Lenstra,
Integer programming with a fixed number of variables,
\emph{Math.\ Oper.\ Res.}~\textbf{8} (1983), 538--548.


\bibitem[MS05]{MS}
E.~Miller and B.~Sturmfels, \emph{Combinatorial commutative algebra},
Springer, New York, 2005.



\bibitem[NP17a]{NP}
D.~Nguyen and I.~Pak,
Enumeration of integer points in projections of unbounded polyhedra,
extended abstract to appear in \emph{Proc.\ IPCO 2017}; \ts {\tt arXiv:1612.08030}.


\bibitem[NP17b]{NP1}
D.~Nguyen and I.~Pak,
The computational complexity of integer programming with alternations; \ts {\tt arXiv:1702.08662}.

\bibitem[NP17c]{NP2}
D.~Nguyen and I.~Pak, Complexity of short generating functions; \ts {\tt arXiv:1702.08660}.


\bibitem[Opp78]{O}
D.~C.~Oppen,
A $2^{2^{2^{pn}}}$ upper bound on the complexity of Presburger arithmetic,
\emph{J.\ Comput.\ System Sci.}~\textbf{16} (1978), 323--332.

\bibitem[Pak02]{Pak}
I.~Pak, On sampling integer points in polyhedra, in
\emph{Foundations of Computational Mathematics}, World Sci.,
River Edge, NJ, 2002,  319--324.

\bibitem[PP15]{PP}
I.~Pak and G.~Panova,
On the complexity of computing Kronecker coefficients, to appear in
\emph{Computational Complexity}; \ts {\tt arXiv:1404.0653}.


\bibitem[Pre29]{Pre}
M.~Presburger, \"{U}ber die Vollst\"{a}ndigkeit eines gewissen Systems
der Arithmetik ganzer Zahlen, in welchem die Addition als einzige
Operation hervortritt (in German), in
\emph{Comptes Rendus du I congr\`{e}s de Math\'{e}maticiens des Pays Slaves},
Warszawa, 1929, 92--101; \ts English transltion in
\emph{Hist.\ Philos.\ Logic}~\textbf{12} (1991), 225--233.


\bibitem[RL78]{RL}
C.~R.~Reddy and D.~W.~Loveland,
Presburger arithmetic with bounded quantifier alternation,
\emph{Proc.\ 10th STOC} (1978), 320-325.

\bibitem[Sca84]{Sca}
B.~Scarpellini,
Complexity of subcases of Presburger arithmetic,
\emph{Trans.\ AMS}~\textbf{284} (1984), 203--218.

\bibitem[Sch86]{Schrijver}
A.~Schrijver,
\emph{Theory of linear and integer programming},
John Wiley, Chichester, 1986.

\bibitem[Sch97]{S}
U.~Sch\"{o}ning,
Complexity of Presburger arithmetic with fixed quantifier dimension,
\emph{Theory Comput.\ Syst.}~\textbf{30} (1997), 423--428.

\bibitem[V+07]{VSBLB}
S.~Verdoolaege, R.~Seghir, K.~Beyls, V.~Loechner and M.~Bruynooghe,
Counting integer points in parametric polytopes using Barvinok's rational functions,
\emph{Algorithmica}~\textbf{48} (2007), 37--66.



\bibitem[Woo04]{Woods}
K.~Woods,
\emph{Rational Generating Functions and Lattice Point Sets},
Ph.D.~thesis, University of Michigan, 2004, 112~pp.

\bibitem[Woo15]{W}
K.~Woods,
Presburger arithmetic, rational generating functions, and quasi-polynomials,
 \emph{J.\ Symb.\ Log.}~\textbf{80} (2015), 433--449.


\bibitem[WV08]{WV}
K.~Woods and S.~Verdoolaege,
Counting with rational generating functions,
\emph{J. Symbolic Comput.}~\textbf{43} (2008), 75--91.

\end{thebibliography}
\end{document}